\documentclass[12pt,a4paper]{amsart}

\usepackage{amsmath,amsfonts,amssymb,mathtools,extarrows}
\usepackage{xcolor}

\usepackage{color}
\usepackage{graphicx}

\usepackage{enumerate}  

\usepackage{cancel}

\usepackage[hmargin=2cm,vmargin=2cm]{geometry}

\usepackage[hypertexnames=false,hyperfootnotes=false,colorlinks=true,linkcolor=blue,%
citecolor=purple,filecolor=magenta,urlcolor=cyan,unicode,linktocpage=true,pagebackref=false]{hyperref}
\usepackage{nameref,zref-xr}     

\hyphenation{CohFTs}
\hyphenation{CohFT  }

\newcommand{\mbP}{\mathbb P}
\newcommand{\mbZ}{\mathbb Z}
\newcommand{\mbC}{\mathbb C}

\newcommand{\oM}{\overline{\mathcal M}}

\def\cM{{\mathcal{M}}}
\def\oM{{\overline{\mathcal{M}}}}

\def\mbQ{{\mathbb Q}}
\def\d{{\partial}}

\newcommand{\eps}{\varepsilon}

\newcommand{\str}{\mathrm{str}}

\newcommand{\cA}{\mathcal A}
\newcommand{\hcA}{\widehat{\mathcal A}}
\newcommand{\DR}{\mathrm{DR}}

\newcommand{\even}{\mathrm{even}}
\newcommand{\ct}{\mathrm{ct}}
\newcommand{\cF}{\mathcal F}

\newcommand{\Coef}{\mathrm{Coef}}

\renewcommand{\top}{\mathrm{top}}
\newcommand{\red}{\mathrm{red}}

\newcommand{\st}{\mathbf{H}}

\DeclareMathOperator{\Aut}{Aut}

\newcommand{\tOmega}{\widetilde\Omega}

\newcommand{\gl}{\mathrm{gl}}

\newcommand{\un}{{1\!\! 1}}
\newcommand{\mcF}{\mathcal{F}}

\DeclareMathOperator{\res}{{res}}

\DeclareMathOperator{\tdeg}{\widetilde{\mathrm{deg}}}

\newcommand{\tP}{\widetilde{P}}

\newcommand{\tw}{\widetilde{w}}

\newcommand{\tA}{\widetilde{A}}

\newcommand{\cL}{\mathcal{L}}
\newcommand{\vir}{\mathrm{vir}}
\renewcommand{\st}{\mathrm{st}}
\renewcommand{\P}{\mathrm{P}}
\newcommand{\wk}{\mathrm{wk}}
\renewcommand{\sp}{\mathrm{sp}}
\newcommand{\ozeta}{\overline{\zeta}}


\newtheorem{theorem}{Theorem}[section]
\newtheorem{proposition}[theorem]{Proposition}
\newtheorem{lemma}[theorem]{Lemma}

\theoremstyle{remark}

\newtheorem{remark}[theorem]{Remark}

\theoremstyle{definition}

\newtheorem{definition}[theorem]{Definition}

\usepackage{color}

\usepackage{comment}


\numberwithin{equation}{section}

\pagestyle{myheadings}

\begin{document}

\title[Pixton's class and the noncommutative KdV hierarchy]{Intersection numbers with Pixton's class and the noncommutative KdV hierarchy}

\author{Alexandr Buryak}
\address{A. Buryak:\newline 
Faculty of Mathematics, National Research University Higher School of Economics, \newline
6 Usacheva str., Moscow, 119048, Russian Federation;\smallskip\newline 
Center for Advanced Studies, Skolkovo Institute of Science and Technology, \newline
1 Nobel str., Moscow, 143026, Russian Federation}
\email{aburyak@hse.ru}

\author{Paolo Rossi}
\address{P. Rossi:\newline Dipartimento di Matematica ``Tullio Levi-Civita'', Universit\`a degli Studi di Padova,\newline
Via Trieste 63, 35121 Padova, Italy}
\email{paolo.rossi@math.unipd.it}

\begin{abstract}
The Pixton class is a nonhomogeneous cohomology class on the moduli space of stable curves~$\oM_{g,n}$, with nontrivial terms in degree $0,2,4,\ldots,2g$, whose top degree part coincides with the double ramification cycle. In this paper, we prove our conjecture from a previous work, claiming that the generating series of intersection numbers of the Pixton class with monomials in the psi-classes gives a solution of the noncommutative KdV hierarchy.
\end{abstract}

\date{\today}

\maketitle

\section{Introduction}

Consider the moduli space of stable maps of connected algebraic curves of genus $g$ to rubber~$\mbP^1$, relative to $0,\infty\in\mbP^1$, with assigned ramification profile over these two points. Forgetting the stable map and stabilizing the source curve provides a natural morphism from this space to the moduli space of stable curves. The pushforward of the virtual fundamental class from the space of rubber maps to the space of stable curves is a degree $2g$ cohomology class called the double ramification (DR) cycle.\\

In the early 2000's Yakov Eliashberg started popularizing the problem of finding an explicit formula for the DR cycle as a tautological class in the moduli space of curves. A first partial result was Hain's formula \cite{Hai13}, which holds on the partial compactification of the moduli space of smooth curves given by curves of compact type. Analyzing the structure of Hain's formula it is not hard to see that it is reminiscent of Givental's $R$-matrix action on cohomological field theories (see \cite{PPZ15}). Building on this observation and using a clever regularization trick to cure divergencies emerging when considering curves of non compact type, Aaron Pixton was able to modify Hain's formula to find an answer to Eliashberg's problem for the full moduli space of stable maps.\\

More precisely Pixton's formula gives a nonhomogeneous tautological cohomology class on the moduli space $\oM_{g,n}$ of stable curves of genus $g$ with $n$ marked points, with terms in all even degrees. By a result from~\cite{CJ18}, the terms in the degrees greater than $2g$ vanish. In \cite{JPPZ17} it is proved that the degree~$2g$ part of this class coincides with the double ramification cycle. As Pixton points out in his 2022 ICM sectional lecture, no geometric interpretation is known for the lower degree terms of the Pixton class yet.\\

In this paper we show that the full Pixton class, including its terms of degree smaller than~$2g$, plays a beautiful role in the relation between intersection theory of the moduli space of curves and integrable systems.\\

Namely, given a ramification profile $A=(a_1,\ldots,a_n) \in \mbZ^n$, denote by $P^j_g(A) \in H^{2j}(\oM_{g,n})$ the degree $2j$ term of Pixton's class. Consider then the generating series for its intersection numbers with monomials in the psi-classes
\begin{gather*}
\cF^\P(t^*_*,\eps,\mu):=\sum_{\substack{g,n\ge 0\\2g-2+n>0}}\sum_{j=0}^g\frac{\eps^{2g}\mu^{2j}}{n!}\sum_{\substack{A=(a_1,\ldots,a_n)\in\mbZ^n\\\sum a_i=0}}\sum_{d_1,\ldots,d_n\ge 0}\left(\int_{\oM_{g,n}}2^{-j}P_g^j(A)\prod_{i=1}^n\psi_i^{d_i}\right)\prod_{i=1}^n t^{a_i}_{d_i},
\end{gather*}
and let
\begin{align*}
&(w^\P)^a:=\frac{\d^2\cF^\P}{\d t^0_0\d t^{-a}_0},\quad a\in\mbZ, \qquad w^\P:=\sum_{a\in\mbZ}(w^\P)^a e^{iay}, \qquad u^\P:=\frac{S(\eps\mu\d_x)}{S(i\eps\mu\d_x\d_y)}w^\P,
\end{align*}
where $S(z):=\frac{e^{z/2}-e^{-z/2}}{z}$. Then our main result states that $u=u^\P$ satisfies the infinite system of compatible PDEs
\begin{align}
\frac{\d u}{\d t_1}=&\d_x\left(\frac{u*u}{2}+\frac{\eps^2}{12}u_{xx}\right),\label{eq:ncKdV equation}\\
\frac{\d u}{\d t_2}=&\d_x\left(\frac{u*u*u}{6}+\frac{\eps^2}{24}(u*u_{xx}+u_x*u_x+u_{xx}*u)+\frac{\eps^4}{240}u_{xxxx}\right),\notag\\
\vdots\hspace{0.08cm}&\notag
\end{align}
where $x=t^0_0$ and $t_d = t^0_d$, which is a noncommutative analogue of the celebrated KdV hierarchy with respect to the noncommutative Moyal product
$$
f*g:=f\ \exp\left(\frac{i \eps\mu}{2}(\overleftarrow{\d_x} \overrightarrow{\d_y}-\overleftarrow{\d_y} \overrightarrow{\d_x})\right)\ g
$$
for functions $f,g$ on a $2$-dimensional torus with coordinates $x,y$ \cite{BR21a}.\\

Notice that, when $\mu=0$, the noncommutative KdV hierarchy collapses to the usual KdV hierarchy and our result recovers the celebrated Witten-Kontsevich theorem \cite{Wit91,Kon92} for integrals on the moduli space of stable curves of monomials in the psi-classes.\\

We conjectured this result in \cite{BR22}, where we also proved that it would be a consequence of the DR/DZ equivalence conjecture of \cite{BDGR18}. The latter is still open in its full generality, which states that two different constructions of integrable systems starting from a given (partial) cohomological field theory, the Dubrovin-Zhang hierarchy \cite{DZ01} and the DR hierarchy \cite{Bur15,BDGR18}, are related by a coordinate change in their phase space. However a partial result in that direction, obtained in \cite{BS22}, together with explicit computations of the flows of these two hierarchies for the partial cohomological field theory at hand, turn out to be sufficient to prove the DR/DZ equivalence specifically for the Pixton class, and hence our result. The precise structure of the proof is explained at the end of Section \ref{section:Pixton and ncKdV}.\\

Notice lastly that, together with the string and dilaton equation, equation~\eqref{eq:ncKdV equation} uniquely determines the generating series $\cF^\P$. An example was demonstrated in~\cite{BR22}, where, assuming that equation~\eqref{eq:ncKdV equation} is true, the authors proved that
\begin{gather}\label{eq:corollary of Pixton}
\sum_{1\le j\le g}\left(\int_{\oM_{g,2}}2^{-j}P_g^j(a,-a)\psi_1^{3g-1-j}\right)\mu^{2j}z^{3g-1-j}=\frac{1}{z}\left(\frac{S(a\mu z)}{S(\mu z)}e^{\frac{z^3}{24}}-1\right).
\end{gather}
Since we prove equation~\eqref{eq:ncKdV equation} in this paper, formula~\eqref{eq:corollary of Pixton} is now established.

\medskip

\subsection*{Notations and conventions}  

\begin{itemize}

\item We use the standard convention of sum over repeated Greek indices.

\smallskip

\item When it doesn't lead to a confusion, we use the symbol $*$ to indicate any value, in the appropriate range, of a sub- or superscript.

\smallskip

\item For a nonnegative integer $n$, let $[n]:=\{1,\dots,n\}$.

\smallskip

\item For a topological space $X$, we denote by $H^i(X)$ the cohomology groups with the coefficients in $\mbC$. Let $H^{\even}(X):=\bigoplus_{i\ge 0}H^{2i}(X)$.

\smallskip

\item We will work with the moduli spaces $\oM_{g,n}$ of stable algebraic curves of genus $g$ with $n$ marked points, which are defined for $g,n\ge 0$ satisfying the condition $2g-2+n>0$. We will often omit mentioning this condition explicitly, and silently assume that it is satisfied when a moduli space is considered. 

\end{itemize}

\medskip

\subsection*{Acknowledgements}

The work of A.~B. is an output of a research project implemented as part of the Basic Research Program at the National Research University Higher School of Economics (HSE University). P.~R. was partially supported by the BIRD-SID-2021 UniPD grant and is affiliated to GNSAGA--INDAM (Istituto Nazionale di Alta Matematica) and to INFN (Istituto Nazionale di Fisica Nucleare).

\medskip


\section{The Pixton class and the noncommutative KdV hierarchy}\label{section:Pixton and ncKdV}

\subsection{The double ramification cycle and the Pixton class}

Denote by $\psi_i\in H^2(\oM_{g,n})$ the first Chern class of the line bundle $\cL_i$ over~$\oM_{g,n}$ formed by the cotangent lines at the $i$-th marked point on stable curves. The classes $\psi_i$ are called the {\it psi-classes}. Denote by~$\mathbb E$ the rank~$g$ {\it Hodge vector bundle} over~$\oM_{g,n}$ whose fibers are the spaces of holomorphic one-forms on stable curves. Let $\lambda_j:=c_j(\mathbb E)\in H^{2j}(\oM_{g,n})$. 

\medskip

Consider an $n$-tuple of integers $A=(a_1,\ldots,a_n)$ such that $\sum a_i=0$; it will be called a {\it vector of double ramification data}. The positive parts of $A$ define a partition $\mu=(\mu_1,\ldots,\mu_{l(\mu)})$. The negative parts of $A$ define a second partition $\nu=(\nu_1,\ldots,\nu_{l(\nu)})$. Since the parts of $A$ sum to~$0$, the partitions $\mu$ and $\nu$ must be of the same size. We allow the case $|\mu|=|\nu|=0$. Let $n_0:=n-l(\mu)-l(\nu)$. The moduli space 
$$
\oM_{g,n_0}(\mbP^1,\mu,\nu)^\sim
$$
parameterizes stable relative maps of connected algebraic curves of genus $g$ to rubber $\mbP^1$ with ramification profiles $\mu,\nu$ over the points $0,\infty\in\mbP^1$, respectively. There is a natural map 
$$
\st\colon \oM_{g,n_0}(\mbP^1,\mu,\nu)^\sim\to\oM_{g,n}
$$
forgetting everything except the marked domain curve. The moduli space $\oM_{g,n_0}(\mbP^1,\mu,\nu)^\sim$ possesses a virtual fundamental class $\left[\oM_{g,n_0}(\mbP^1,\mu,\nu)^\sim\right]^\vir$, which is a homology class of degree $2(2g-3+n)$. The {\it double ramification cycle} 
$$
\DR_g(A)\in H^{2g}(\oM_{g,n})
$$
is defined as the Poincar\'e dual to the push-forward $\st_*\left[\oM_{g,n_0}(\mbP^1,\mu,\nu)^\sim\right]^\vir\in H_{2(2g-3+n)}(\oM_{g,n})$.

\medskip

Let us now recall Pixton's very explicit construction of a nonhomogeneous cohomology class on $\oM_{g,n}$, with nontrivial terms in degree $0,2,4,\ldots,2g$. By a result of~\cite{JPPZ17}, the degree~$2g$ part of this class coincides with the double ramification cycle. 

\medskip

Let us first recall a standard way to construct cohomology classes on $\oM_{g,n}$ in terms of stable graphs. A {\it stable graph} is the following data:
$$
\Gamma=(V,H,L,g\colon V\to\mbZ_{\ge 0},v\colon H\to V,\iota\colon H\to H),
$$
where 
\begin{enumerate}
\item $V$ is a set of {\it vertices} with a genus function $g\colon V\to\mbZ_{\ge 0}$;

\smallskip

\item $H$ is a set of {\it half-edges} equipped with a vertex assignment $v\colon H\to V$ and an involution~$\iota$;

\smallskip

\item the set of {\it edges} $E$ is defined as the set of orbits of $\iota$ of length $2$;

\smallskip

\item the set of {\it legs} $L$ is defined as the set of fixed points of $\iota$ and is placed in bijective correspondence with the set $[n]$, the leg corresponding to the marking $i\in[n]$ will be denoted by $l_i$;

\smallskip

\item the pair $(V,E)$ defines a connected graph;

\smallskip

\item the stability condition $2g(v)-2+n(v)>0$ is satisfied at each vertex $v\in V$, where $n(v)$ is the valence of $\Gamma$ at $v$ including both half-edges and legs.  
\end{enumerate}
An {\it automorphism} of $\Gamma$ consists of automorphisms of the sets $V$ and $H$ that leave invariant the structures $L,g,v$, and $\iota$. Denote by $\Aut(\Gamma)$ the authomorphism group of $\Gamma$. The {\it genus} of a stable graph $\Gamma$ is defined by $g(\Gamma):=\sum_{v\in V}g(v)+h^1(\Gamma)$. Denote by $G_{g,n}$ the set of isomorphism classes of stable graphs of genus $g$ with $n$ legs.

\medskip

For each stable graph $\Gamma\in G_{g,n}$, there is an associated moduli space
$$
\oM_\Gamma:=\prod_{v\in V}\oM_{g(v),n(v)}
$$
and a canonical map
$$
\xi_\Gamma\colon \oM_\Gamma\to\oM_{g,n}
$$
that is given by the gluing of the marked points corresponding to the two halves of each edge in $E(\Gamma)$. Each half-edge $h\in H(\Gamma)$ determines a cotangent line bundle $\cL_h\to\oM_\Gamma$. If $h\in L(\Gamma)$, then $\cL_h$ is the pull-back via $\xi_\Gamma$ of the corresponding cotangent line bundle over~$\oM_{g,n}$. Let $\psi_h:=c_1(\cL_h)\in H^2(\oM_\Gamma)$. The Pixton class will be described as a linear combination of cohomology classes of the form
$$
\xi_{\Gamma*}\left(\prod_{h\in H}\psi_h^{d(h)}\right),
$$
where $\Gamma\in G_{g,n}$ and $d\colon H(\Gamma)\to\mbZ_{\ge 0}$.

\medskip 

Let $A=(a_1,\ldots,a_n)$ be a vector of double ramification data. Let $\Gamma\in G_{g,n}$ and $r\ge 1$. A {\it weighting mod $r$} of $\Gamma$ is a function
$$
w\colon H(\Gamma)\to \{0,\ldots,r-1\}
$$
that satisfies the following three properties:
\begin{enumerate}
\item for any leg $l_i\in L(\Gamma)$, we have $w(l_i)=a_i\mod r$;

\smallskip

\item for any edge $e=\{h,h'\}\in E(\Gamma)$, we have $w(h)+w(h')=0\mod r$;

\smallskip

\item for any vertex $v\in V(\Gamma)$, we have $\sum_{h\in H(\Gamma),\,v(h)=v}w(h)=0\mod r$.
\end{enumerate}
Denote by $W_{\Gamma,r}$ the set of weightings mod $r$ of $\Gamma$. We have $|W_{\Gamma,r}|=r^{h^1(\Gamma)}$.

\medskip

We denote by $P_g^{d,r}(A)\in H^{2d}(\oM_{g,n})$ the degree $2d$ component of the cohomology class 
\begin{equation}\label{eq:Pixton}
\sum_{\Gamma\in G_{g,n}}\sum_{w\in W_{\Gamma,r}}\frac{1}{|\Aut(\Gamma)|}\frac{1}{r^{h^1(\Gamma)}}\xi_{\Gamma*}\left[\prod_{i=1}^n\exp(a_i^2\psi_{l_i})\prod_{e=\{h,h'\}\in E(\Gamma)}\frac{1-\exp\left(-w(h)w(h')(\psi_h+\psi_{h'})\right)}{\psi_h+\psi_{h'}}\right]
\end{equation}
in $H^*(\oM_{g,n})$. Note that the factor $\frac{1-\exp\left(-w(h)w(h')(\psi_h+\psi_{h'})\right)}{\psi_h+\psi_{h'}}$ is well defined since the denominator formally divides the numerator. In~\cite{JPPZ17}, the authors proved that for fixed $g,A$, and $d$ the class $P_g^{d,r}$ is polynomial in~$r$ for all sufficiently large~$r$. Denote by $P_g^d(A)$ the constant term of the associated polynomial in~$r$.

\medskip

In~\cite{JPPZ17}, the authors proved that 
$$
\DR_g(A)=2^{-g}P^g_g(A).
$$
In~\cite{CJ18}, the authors proved that the class $P_g^d(A)$ vanishes for $d>g$. 

\medskip

\subsection{The noncommutative KdV hierarchy}

The classical construction of the \emph{KdV hierarchy} as the system of Lax equations (see, e.g.,~\cite{Dic03})
$$
\frac{\d L}{\d t_n}=\frac{\eps^{2n}}{(2n+1)!!}\left[\left(L^{n+1/2}\right)_+,L\right],\quad n\ge 1,
$$
where $L:=\d_x^2+2\eps^{-2}u$, $u$ is a function of $x,t_1,t_2,\ldots$, $\eps$ is a formal parameter, and $(2n+1)!!:=(2n+1)\cdot(2n-1)\cdots 3\cdot 1$, admits generalizations, called \emph{noncommutative KdV hierarchies}, where one doesn't have the pairwise commutativity of the $x$-derivatives of the dependent variable~$u$. In what follows, we will work with a specific example from the class of noncommutative KdV hierarchies.

\medskip

Let $u_{k_1,k_2}$, $k_1,k_2\in\mbZ_{\ge 0}$, $\eps$, and $\mu$ be formal variables and consider the algebra $\hcA:=\mbC[[u_{*,*},\eps,\mu]]$, whose elements will be called {\it differential polynomials in two space variables}. Consider a gradation on $\hcA$ given by 
$$
\deg u_{k_1,k_2}:=(k_1,k_2),\qquad \deg\eps:=(-1,0),\qquad \deg\mu:=(0,-1).
$$
We will denote by $\hcA^{[(d_1,d_2)]}\subset\hcA$ the space of differential polynomials of degree $(d_1,d_2)$. The space $\hcA$ is endowed with operators $\d_x$ and $\d_y$ of degrees $(1,0)$ and $(0,1)$, respectively, defined~by
\begin{gather*}
\d_x:=\sum_{k_1,k_2\geq 0} u_{k_1+1,k_2} \frac{\d}{\d u_{k_1,k_2}},\qquad \d_y:=\sum_{k_1,k_2\geq 0} u_{k_1,k_2+1} \frac{\d}{\d u_{k_1,k_2}}.
\end{gather*}
We see that $u_{k_1,k_2}=\d_x^{k_1}\d_y^{k_2}u_{0,0}$. We will denote $u_{0,0}$ simply by $u$.

\medskip

The algebra $\hcA$ is also endowed with the {\it Moyal star-product} defined by
\begin{gather}\label{eq:Moyal}
f*g:=f\ \exp\left(\frac{i \eps\mu}{2}(\overleftarrow{\d_x} \overrightarrow{\d_y}-\overleftarrow{\d_y} \overrightarrow{\d_x})\right)\ g =\sum_{k_1,k_2\ge 0}\frac{(-1)^{k_2}(i\eps\mu)^{k_1+k_2}}{2^{k_1+k_2} k_1!k_2!}(\d_x^{k_1} \d_y^{k_2} f) (\d_x^{k_2}\d_y^{k_1} g),
\end{gather}
where $f,g \in \mbC[[u_{*,*},\eps,\mu]]$. The Moyal star-product is associative and it is graded: if $\deg f = (i_1,i_2)$ and $\deg g = (j_1,j_2)$, then $\deg{(f * g)} = (i_1+j_1,i_2+j_2)$. Note also that when $\mu=0$ the Moyal star-product becomes the usual multiplication: 
\begin{gather}\label{eq:Moyal at mu=0}
\left.(f*g)\right|_{\mu=0}=f|_{\mu=0}\cdot g|_{\mu=0}.
\end{gather}

\medskip

Let us now consider the algebra of \emph{pseudo-differential operators} of the form
\begin{gather}\label{eq:pseudo-differential operator}
A=\sum_{i\le n}a_i*\d_x^i,\quad n\in\mbZ,\quad a_i\in\mbC[[u_{*,*},\mu]][[\eps,\eps^{-1}],
\end{gather}
with the multiplication $\circ$ given by
$$
(a*\d_x^i)\circ(b*\d_x^j):=\sum_{k\ge 0}{i\choose k}(a*\d_x^k b)*\d_x^{i+j-k},\quad a,b\in \mbC[[u_{*,*},\mu]][[\eps,\eps^{-1}],\quad i,j\in\mbZ.
$$
The positive part of a pseudo-differential operator~\eqref{eq:pseudo-differential operator} is defined by $A_+:=\sum_{0\le i\le n}a_i*\d_x^i$ and, as in the classical theory of pseudo-differential operators, a pseudo-differential operator $A$ of the form $\d_x^2+\sum_{i<2}a_i*\d_x^i$ has a unique square root of the form $\d_x+\sum_{i<1}b_i*\d_x^i$, which we denote by $A^{\frac{1}{2}}$.

\medskip

Consider the operator $L:=\d_x^2+2\eps^{-2}u$. The {\it noncommutative KdV hierarchy} with respect to the Moyal star-product~\eqref{eq:Moyal} is defined by (see, e.g.,~\cite{Ham05,DM00})
\begin{gather}\label{eq:ncKdV hierarchy}
\frac{\d L}{\d t_n}=\frac{\eps^{2n}}{(2n+1)!!}\left[\left(L^{n+1/2}\right)_+,L\right],\quad n\ge 1.
\end{gather}
The noncommutative KdV hierarchy is integrable in the sense that its flows pairwise commute. Explicitly, the first two equations of the hierarchy are
\begin{align*}
\frac{\d u}{\d t_1}=&\d_x\left(\frac{u*u}{2}+\frac{\eps^2}{12}u_{xx}\right),\\
\frac{\d u}{\d t_2}=&\d_x\left(\frac{u*u*u}{6}+\frac{\eps^2}{24}(u*u_{xx}+u_x*u_x+u_{xx}*u)+\frac{\eps^4}{240}u_{xxxx}\right).
\end{align*}

\medskip

\begin{remark}\label{remark:about ncKdV}
We see that the noncommutative KdV hierarchy is a system of evolutionary PDEs with one dependent variable~$u$ and two spatial variables $x$ and $y$. Note that after the substitution $u=\sum_{a\in\mbZ}u^a e^{iay}$, where $u^a$ are new formal variables, the noncommutative KdV hierarchy becomes a system of evolutionary PDEs with infinitely many dependent variables~$u^a$, $a\in\mbZ$, and one spatial variable $x$. Let us present, for example, the resulting equations for the flow~$\frac{\d}{\d t_1}$:
\begin{gather}\label{eq:ncKdV,flow1,with x}
\frac{\d u^a}{\d t_1}=\d_x\left(\sum_{\substack{a_1,a_2\in\mbZ\\a_1+a_2=a}}e^{-\frac{a_2}{2}\eps\mu\d_x}u^{a_1}\cdot e^{\frac{a_1}{2}\eps\mu\d_x}u^{a_2}\right)+\frac{\eps^2}{12}u^a_{xxx},\quad a\in\mbZ.
\end{gather}
\end{remark}

\medskip

Note that because of~\eqref{eq:Moyal at mu=0} the noncommutative KdV hierarchy becomes the classical KdV hierarchy when $\mu=0$.

\medskip

\subsection{Main result}

Consider formal variables $t^a_d$, where $a\in\mbZ$ and $d\in\mbZ_{\ge 0}$. Let us introduce the following generating series:
\begin{gather*}
\cF^\P(t^*_*,\eps,\mu):=\sum_{g,n\ge 0}\sum_{j=0}^g\frac{\eps^{2g}\mu^{2j}}{n!2^j}\sum_{\substack{A=(a_1,\ldots,a_n)\in\mbZ^n\\\sum a_i=0}}\sum_{d_1,\ldots,d_n\ge 0}\left(\int_{\oM_{g,n}}P_g^j(A)\prod_{i=1}^n\psi_i^{d_i}\right)\prod_{i=1}^n t^{a_i}_{d_i}\in\mbC[[t^*_*,\eps,\mu]].
\end{gather*}
Introduce a formal power series 
$$
S(z):=\frac{e^{z/2}-e^{-z/2}}{z}=1+\frac{z^2}{24}+\frac{z^4}{1920}+O(z^6)
$$
and let
\begin{align}
&w^{\P;a}:=\frac{\d^2\cF^\P}{\d t^0_0\d t^{-a}_0}\in\mbC[[t^*_*,\eps,\mu]],\quad a\in\mbZ,\notag\\
&w^\P:=\sum_{a\in\mbZ}w^{\P;a} e^{iay}\in\mbC[[t^*_*,\eps,\mu]][[e^{iy},e^{-iy}]],\notag\\
&u^\P:=\frac{S(\eps\mu\d_x)}{S(i\eps\mu\d_x\d_y)}w^\P\in\mbC[[t^*_*,\eps,\mu]][[e^{iy},e^{-iy}]].\label{eq:uP and wP}
\end{align}
Here we identify $x=t^0_0$, so the operator $\d_x$ acts as the partial derivative $\frac{\d}{\d t^0_0}$.

\medskip

The following theorem was conjectured in~\cite[Conjecture~2]{BR22}.

\begin{theorem}\label{theorem:main}
The series $u^\P$ satisfies the noncommutative KdV hierarchy~\eqref{eq:ncKdV hierarchy}, where we identify $t^0_d=t_d$ and $t^0_0=x$.
\end{theorem}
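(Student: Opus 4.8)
The plan is to reduce the statement to the \emph{DR/DZ equivalence} for the partial cohomological field theory (CohFT) attached to Pixton's class, and then to establish that equivalence for this particular theory. The reduction is already available: in \cite{BR22} it was shown that, once the Dubrovin--Zhang hierarchy and the double ramification hierarchy of this CohFT are related by a Miura transformation, the series $u^\P$ of \eqref{eq:uP and wP} must solve \eqref{eq:ncKdV hierarchy}. So the genuinely new content is the DR/DZ equivalence for the Pixton CohFT, and the structure of the argument is: (i) fix the CohFT and its two hierarchies; (ii) recall that the DR hierarchy of this theory is the noncommutative KdV hierarchy and that \eqref{eq:uP and wP} is the candidate Miura transformation; (iii) prove that this candidate is an actual Miura transformation intertwining the two hierarchies.

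First I would make the partial CohFT explicit. The double ramification data $A=(a_1,\dots,a_n)$, $\sum a_i=0$, label insertions from the infinite-rank state space $\bigoplus_{a\in\mbZ}\mbC e_a$, and the assignment $(e_{a_1},\dots,e_{a_n})\mapsto\sum_j\mu^{2j}2^{-j}P_g^j(A)$ defines the classes of the theory, with $\mu$ recording cohomological degree and the top degree $j=g$ reproducing $\DR_g(A)=2^{-g}P_g^g(A)$. I would record the genus-zero two- and three-point correlators, producing the metric $\eta_{a,b}=\delta_{a+b,0}$ and the structure constants of the underlying Frobenius-type algebra; these feed both hierarchy constructions.

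Next I would write the two hierarchies. Up to the standard genus rescaling, $\cF^\P$ is the logarithm of the tau-function of the Dubrovin--Zhang hierarchy of the Pixton CohFT, whose topological solution in normal coordinates is exactly $w^{\P;a}=\frac{\d^2\cF^\P}{\d t^0_0\d t^{-a}_0}$; this step is essentially tautological. The double ramification hierarchy is built in the standard way, its Hamiltonians being intersection numbers over $\oM_{g,n+1}$ of a double ramification cycle, the top Hodge class $\lambda_g$, psi-classes, and the Pixton classes $P_g^j$ themselves. As in \cite{BR22}, after the substitution $u=\sum_{a\in\mbZ}u^a e^{iay}$ its flows become precisely \eqref{eq:ncKdV hierarchy}, the Moyal star-product with deformation parameter $\eps\mu$ arising from the way the $\mu$-graded Pixton class couples the different $e_a$.

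The hard part, and the only place where open input enters, is step (iii): proving that $w^\P$ and $u^\P$ are related by a genuine Miura transformation, which must then be the explicit operator $\frac{S(\eps\mu\d_x)}{S(i\eps\mu\d_x\d_y)}$ of \eqref{eq:uP and wP}. Since the full DR/DZ conjecture is open, I would invoke the partial result of \cite{BS22}, which reduces the equivalence for a given CohFT to a controlled amount of data (a tameness/normalization condition on the quasi-Miura transformation linking the two hierarchies), and then discharge the remaining conditions by explicitly computing and comparing the first few flows and Hamiltonians of the two hierarchies for the Pixton CohFT. The main obstacle is that one must track all degrees $0,2,\dots,2g$ of the Pixton class at once --- not merely the DR cycle --- so that the entire $\mu$-dependence and the full Moyal structure match on both sides; carrying this out for an infinite-rank CohFT while keeping the dispersive corrections under control is where the bulk of the technical work lies. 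Once the Miura transformation is confirmed, the implication of \cite{BR22} gives that $u^\P$ solves \eqref{eq:ncKdV hierarchy}, as required.
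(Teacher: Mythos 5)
Your high-level skeleton coincides with the paper's: reduce Theorem~\ref{theorem:main} to the DR/DZ equivalence for the Pixton partial CohFT, use the fact that the DR hierarchy of this theory is the noncommutative KdV hierarchy (this is \cite[Theorem~4.1]{BR21a}, not a result of \cite{BR22}), and establish the equivalence using the partial results of \cite{BS22} together with explicit computations. But your step (iii) --- the only step with genuinely new content --- is left as a black box, and the way you propose to close it would not work. You say you would ``discharge the remaining conditions by explicitly computing and comparing the first few flows and Hamiltonians of the two hierarchies.'' Comparing finitely many flows can never show that two hierarchies with infinitely many commuting flows coincide unless one has a uniqueness principle, and this is precisely the missing idea. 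The paper supplies it as Proposition~\ref{proposition:reconstruction} (a generalization of \cite[Section~5.2]{BG16}): any compatible hierarchy in a suitable normalized class is uniquely reconstructed from its single special flow $\frac{\d}{\d t^\un_1}$. Granting this, it suffices to show that the DZ special flow, after the Miura transformation $w^\alpha\mapsto T_\alpha(\eps\mu\d_x)w^\alpha$, equals the first ncKdV flow; that is Part~2 of Proposition~\ref{proposition:main}, proved by evaluating $\psi$-integrals over DR cycles via \cite{BSSZ15} and identifying the coefficients through the generating-series equation this forces. Your proposal contains no substitute for this uniqueness mechanism.

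A second gap: before one can even speak of a Miura transformation between the two hierarchies, one must prove that the DZ hierarchy of the Pixton CohFT is polynomial --- a priori its right-hand sides $\Omega^{\alpha,0}_{\beta,b}$ lie only in the larger algebra $\hcA^\wk_w$, and polynomiality for non-semisimple (here, infinite-rank) partial CohFTs is open in general. This is Part~1 of Proposition~\ref{proposition:main}, and its proof is not a formality: it uses the finite-rank semisimple CohFTs $\Omega^r_{g,n}$ approximating Pixton's class (whose DZ hierarchies are polynomial by \cite[Theorem~22]{BPS12}), polynomiality in $r$, and the $B$-class criterion \eqref{eq:Omegared and B-class} of \cite{BS22}. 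Relatedly, your description of the \cite{BS22} input as ``a tameness/normalization condition on the quasi-Miura transformation'' does not match what that paper provides or what is used here; the actual inputs are the criterion \eqref{eq:Omegared and B-class} and the one-point equality \eqref{eq:one-point equality}, the latter entering the computation of the dispersive coefficients $B_{\alpha,g}$ in Section~\ref{subsection:part 2}. So while your outline reproduces the strategy announced in the introduction, both pillars of the actual proof --- polynomiality of the DZ hierarchy and reconstruction from the special flow --- are absent, and the replacement you offer for them does not suffice.
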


\begin{remark}
Introduce formal power series $T_a(z)$, $a\in\mbZ$, by $T_a(z):=\frac{S(z)}{S(a z)}$, and consider the expansion $u^{\P}=\sum_{a\in\mbZ}u^{\P;a}e^{iay}$. Then formula~\eqref{eq:uP and wP} implies that
$$
u^{\P;a}=T_a(\eps\mu\d_x)w^{\P;a}.
$$
So, equivalently, Theorem~\ref{theorem:main} says that the collection of formal powers series $u^{\P;a}$, $a\in\mbZ$, satisfies the noncommutative KdV hierarchy viewed as a system of evolutionary PDEs with one spatial variable, see Remark~\ref{remark:about ncKdV}. 
\end{remark}

\medskip

The proof of this theorem is organised as follows. 

\medskip

To any finite rank partial cohomological field theory (CohFT), one can associate two systems of evolutionary PDEs with one spatial variable: the Dubrovin--Zhang (DZ) hierarchy~\cite{DZ01,BS22} and the double ramification (DR) hierarchy \cite{Bur15,BDGR18,BS22}. We will recall the necessary definitions and constructions in Sections~\ref{subsection:DZ hierarchy} and~\ref{subsection:DR hierarchy}. Conjecturally, the two hierarchies are Miura equivalent. This is not proved yet, but there is a partial result in this direction, obtained in~\cite{BS22}, which we will recall in Section~\ref{subsection:relation between two hierarchies}. After that, in Section~\ref{subsection:reconstruction from one flow}, we will describe a class of integrable systems, containing the DZ and DR hierarchies, having the property that any integrable system from this class is uniquely determined by the equation of one particular flow, which we call the \emph{special flow}. This is a slight generalization of a result from~\cite[Section~5.2]{BG16}. Pixton's classes form an infinite rank partial CohFT~\cite[Proposition~4.6]{BR22}. Extending the constructions and results, developed for the finite rank case, to the infinite rank case requires some care, as it was already mentioned in~\cite[Section~3]{BR21a} and~\cite[Section~4.1]{BR22}. In Section~\ref{subsection:comments about infinite rank}, we will discuss how to extend the results that we need to the infinite rank case.  

\medskip

After the preparatory work in Section~\ref{section:hierarchies for pCohFTs}, the proof of Theorem~\ref{theorem:main} is presented in Section~\ref{section:proof of the main theorem}. In~\cite{BR21a}, the authors proved that a part of the DR hierarchy corresponding to the partial CohFT formed by Pixton's classes coincides with the noncommutative KdV hierarchy, viewed as a system of evolutionary PDEs with one spatial variable, see Remark~\ref{remark:about ncKdV}. Therefore, for the proof of Theorem~\ref{theorem:main}, it is sufficient to prove that the DR and DZ hierarchies are Miura equivalent. In Section~\ref{subsection:using DR hierarchy}, we will show that the Miura equivalence follows from Proposition~\ref{proposition:main}, which says that the DZ hierarchy is polynomial and explicitly describes the flow $\frac{\d}{\d t^0_1}$. The two parts of the proposition are then proved in Sections~\ref{subsection:part 1} and~\ref{subsection:part 2}, respectively, using results from~\cite{BPS12},~\cite{BSSZ15},~and~\cite{BS22}. This will complete the proof of Theorem~\ref{theorem:main}.

\medskip


\section{Hierarchies associated to a partial cohomological field theory}\label{section:hierarchies for pCohFTs}

\begin{definition}[\cite{LRZ15}]
A \emph{partial cohomological field theory (CohFT)} is a system of linear maps 
$$
c_{g,n}\colon V^{\otimes n} \to H^\even(\oM_{g,n}),\quad g,n\ge 0,\quad 2g-2+n>0,
$$
where $V$ is an arbitrary finite dimensional vector space, together with a special element $e\in V$, called the \emph{unit}, and a symmetric nondegenerate bilinear form $\eta\in (V^*)^{\otimes 2}$, called the \emph{metric}, such that the following axioms are satisfied:
\begin{itemize}
\item[(i)] The maps $c_{g,n}$ are equivariant with respect to the $S_n$-action permuting the $n$ copies of~$V$ in $V^{\otimes n}$ and the $n$ marked points in $\oM_{g,n}$, respectively.

\smallskip

\item[(ii)] $\pi^* c_{g,n}(\otimes_{i=1}^n v_i) = c_{g,n+1}(\otimes_{i=1}^n  v_i\otimes e)$ for $v_1,\ldots,v_n\in V$, where $\pi\colon\oM_{g,n+1}\to\oM_{g,n}$ is the map that forgets the last marked point. Moreover, $c_{0,3}(v_1\otimes v_2 \otimes e) =\eta(v_1\otimes v_2)$ for $v_1,v_2\in V$.

\smallskip

\item[(iii)] Choosing a basis $e_1,\ldots,e_{\dim V}$ of $V$, we have
$$
\gl^* c_{g_1+g_2,n_1+n_2}( \otimes_{i=1}^{n_1+n_2} e_{\alpha_i}) = \eta^{\mu \nu}c_{g_1,n_1+1}(\otimes_{i\in I} e_{\alpha_i} \otimes e_\mu)\otimes c_{g_2,n_2+1}(\otimes_{j\in J} e_{\alpha_j}\otimes e_\nu)
$$
for $1\leq\alpha_1,\ldots,\alpha_{n_1+n_2}\leq \dim V$, where $I \sqcup J =[n_1+n_2]$, $|I|=n_1$, $|J|=n_2$, and $\gl\colon\oM_{g_1,n_1+1}\times\oM_{g_2,n_2+1}\to \oM_{g_1+g_2,n_1+n_2}$ is the corresponding gluing map, and where $\eta_{\alpha\beta}:=\eta(e_\alpha\otimes e_\beta)$ and~$\eta^{\alpha\beta}$ is defined by $\eta^{\alpha \mu}\eta_{\mu \beta} = \delta^\alpha_\beta$ for $1\leq \alpha,\beta\leq \dim V$. Clearly the axiom doesn't depend on the choice of a basis in $V$.
\end{itemize}
The dimension of $V$ is called the \emph{rank} of the partial CohFT.
\end{definition}

\medskip

For any partial CohFT, the tensor $c^\alpha_{\beta\gamma}:=\eta^{\alpha\mu}c_{0,3}(e_\mu\otimes e_\beta\otimes e_\gamma)$ defines a commutative associative algebra. The partial CohFT is called \emph{semisimple} if this algebra doesn't have nilpotents.

\medskip

\begin{definition}[\cite{KM94}]
A \emph{CohFT} is a partial CohFT $c_{g,n}\colon V^{\otimes n} \to H^\even(\oM_{g,n})$ such that the following extra axiom is satisfied:
\begin{itemize}
\item[(iv)] $\gl^* c_{g+1,n}(\otimes_{i=1}^n e_{\alpha_i}) = c_{g,n+2}(\otimes_{i=1}^n e_{\alpha_i}\otimes e_{\mu}\otimes e_\nu) \eta^{\mu \nu}$ for $1 \leq\alpha_1,\ldots,\alpha_n\leq \dim V$, where  $\gl\colon\oM_{g,n+2}\to \oM_{g+1,n}$ is the gluing map that increases the genus by identifying the last two marked points.
\end{itemize}
\end{definition}

\medskip

\subsection{The Dubrovin--Zhang hierarchy}\label{subsection:DZ hierarchy}

A construction of a dispersive hierarchy, the so-called \emph{Dubrovin--Zhang (DZ) hierarchy}, associated to an arbitrary homogeneous semisimple Dubrovin--Frobenius manifold was developed in a series of papers of Dubrovin and Zhang~\cite{Dub96,DZ98,DZ99,DZ01}. Perhaps the most complete exposition of their theory is contained in~\cite{DZ01}. An equivalent approach, which can also be generalized to the nonhomogeneous case, was presented in~\cite{BPS12}, where the authors also proved the polynomiality property of the equations of the DZ hierarchy. A further generalization of the construction of the DZ hierarchy was given in~\cite{BS22}, where the authors also clarified a role of certain relations in the cohomology of $\oM_{g,n}$ for the polynomiality property of the DZ hierarchy. In this section, we present the necessary definitions and results from the theory of DZ hierarchies, following~\cite{BS22}.

\medskip

\subsubsection{Construction}

Let us fix an integer $N\ge 1$ and consider formal variables $w^1,\ldots,w^N$. Let us briefly recall main notions and notations in the formal theory of evolutionary PDEs with one spatial variable:
\begin{itemize}
\item To the formal variables $w^\alpha$ we attach formal variables $w^\alpha_d$ with $d\ge 0$ and introduce the algebra of \emph{differential polynomials} $\cA_w:=\mbC[[w^*]][w^*_{\ge 1}]$. We identify $w^\alpha_0=w^\alpha$ and also denote $w^\alpha_x:=w^\alpha_1$, $w^{\alpha}_{xx}:=w^\alpha_2$, \ldots.

\smallskip

\item An operator $\d_x\colon\cA_w\to\cA_w$ is defined by $\d_x:=\sum_{d\ge 0}w^\alpha_{d+1}\frac{\d}{\d w^\alpha_d}$.

\smallskip

\item $\cA_{w;d}\subset\cA_w$ is the homogeneous component of (differential) degree $d$, where $\deg w^\alpha_i:=i$. 

\smallskip

\item The extended space of differential polynomials is defined by $\hcA_w:= \cA_w[[\eps]]$. Let $\hcA_{w;k}\subset\hcA_w$ be the homogeneous component of degree~$k$, where $\deg\eps:=-1$.  

\smallskip

\item A system of \emph{evolutionary PDEs} (with one spatial variable) is a system of equations of the form $\frac{\d w^\alpha}{\d t}=P^\alpha$, $1\le\alpha\le N$, where $P^\alpha\in\hcA_w$. Two systems $\frac{\d w^\alpha}{\d t}=P^\alpha$ and $\frac{\d w^\alpha}{\d s}=Q^\alpha$ are said to be \emph{compatible} (or, equivalently, that the flows $\frac{\d}{\d t}$ and $\frac{\d}{\d s}$ \emph{commute}) if $\sum_{n\ge 0}\left(\frac{\d P^\alpha}{\d w^\beta_n}\d_x^n Q^\beta-\frac{\d Q^\alpha}{\d w^\beta_n}\d_x^n P^\beta\right)=0$ for any $1\le\alpha\le N$. 

\smallskip

\item A \emph{Miura transformation} (that is close to identity) is a change of variables $w^\alpha\mapsto \tw^\alpha(w^*_*,\eps)$ of the form $\tw^\alpha(w^*_*,\eps)=w^\alpha+\eps f^\alpha(w^*_*,\eps)$, where $f^\alpha\in\hcA_{w;1}$. 
\end{itemize}

\medskip

Consider an arbitrary partial CohFT $\{c_{g,n}\colon V^{\otimes n}\to H^{\even}(\oM_{g,n})\}$ with $\dim V=N$, metric $\eta\colon V^{\otimes 2}\to \mbC$, and unit $e\in V$. We fix a basis $e_1,\ldots,e_N\in V$, consider formal variables $t^\alpha_d$ with $1\le\alpha\le N$ and $d\ge 0$, and define the \emph{potential} of our partial CohFT by
\begin{gather*}
\mcF:=\sum_{g,n\ge 0}\frac{\eps^{2g}}{n!}\sum_{\substack{1\le\alpha_1,\ldots,\alpha_n\le N\\d_1,\ldots,d_n\ge 0}}\left(\int_{\oM_{g,n}}c_{g,n}(\otimes_{i=1}^n e_{\alpha_i})\prod_{i=1}^n\psi_i^{d_i}\right)\prod_{i=1}^n t^{\alpha_i}_{d_i}\in\mbC[[t^*_*,\eps]].
\end{gather*}
The potential satisfies the \emph{string} and the \emph{dilaton} equations:
\begin{align}
&\frac{\d\mcF}{\d t^\un_0}=\sum_{n\ge 0}t^\alpha_{n+1}\frac{\d\mcF}{\d t^\alpha_n}+\frac{1}{2}\eta_{\alpha\beta}t^\alpha_0 t^\beta_0+\eps^2\int_{\oM_{1,1}}c_{1,1}(e),\label{eq:string equation}\\
&\frac{\d\mcF}{\d t^\un_1}=\sum_{n\ge 0}t^\alpha_n\frac{\d\mcF}{\d t^\alpha_n}+\eps\frac{\d\mcF}{\d\eps}-2\mcF+\eps^2\int_{\oM_{1,1}}\psi_1 c_{1,1}(e),\label{eq:dilaton equation}
\end{align}
where $\frac{\d}{\d t^\un_0}:=A^\mu\frac{\d}{\d t^\mu_0}$, and the coefficients $A^\mu$ are given by $e=A^\mu e_\mu$. Let us also define formal power series $w^{\top;\alpha}:=\eta^{\alpha\mu}\frac{\d^2\mcF}{\d t^\mu_0\d t^\un_0}$, $w^{\top;\alpha}_n:=\frac{\d^n w^{\top;\alpha}}{(\d t^\un_0)^n}$.

\medskip

For $d\ge 0$, denote by $\mbC[[t^*_*]]^{(d)}$ the subset of $\mbC[[t^*_*]]$ formed by infinite linear combinations of monomials $\prod t^{\alpha_i}_{d_i}$ with $\sum d_i\ge d$. Clearly, $\mbC[[t^*_*]]^{(d)}\subset \mbC[[t^*_*]]$ is an ideal. From the string equation~\eqref{eq:string equation} it follows that
\begin{gather}\label{eq:main property of wtop}
w^{\top;\alpha}_n=t^\alpha_n+\delta_{n,1}A^\alpha+R^\alpha_n(t^*_*)+O(\eps^2)\quad \text{for some $R^\alpha_n\in\mbC[[t^*_*]]^{(n+1)}$}.
\end{gather}
This implies that any formal power series in the variables $t^\alpha_a$ and $\eps$ can be expressed as a formal power series in $\left(w^{\top;\beta}_b-\delta_{b,1}A^\beta\right)$ and $\eps$ in a unique way. 

\medskip

So we denote by $\cA^\wk_w$ the ring of formal power series in the shifted variables $(w^\alpha_n-A^\alpha\delta_{n,1})$, and let $\hcA^\wk_w:=\cA^\wk_w[[\eps]]$. We have the obvious inclusion $\hcA_w\subset\hcA^\wk_w$. We see that for any $(\alpha,a),(\beta,b)\in[N]\times\mbZ_{\ge 0}$ there exists a unique element $\Omega^{\alpha,a}_{\beta,b}\in\hcA^\wk_w$ such that
$$
\eta^{\alpha\mu}\frac{\d^2\mcF}{\d t^\mu_a\d t^{\beta}_{b}}=\left.\Omega^{\alpha,a}_{\beta,b}\right|_{w^\gamma_c=w^{\top;\gamma}_c}.
$$
Clearly we have $\frac{\d w^{\top;\alpha}}{\d t^\beta_b}=\left.\d_x\Omega^{\alpha,0}_{\beta,b}\right|_{w^\gamma_c=w^{\top;\gamma}_c}$. Note also that $\Omega^{\alpha,0}_{\un,0}=w^\alpha$. Therefore, the $N$-tuple of formal powers series $\left.w^{\top;\alpha}\right|_{t^\gamma_0\mapsto t^\gamma_0+A^\gamma x}$ satisfies the system of generalized PDEs
\begin{gather}\label{eq:DZ for F-CohFT}
\frac{\d w^\alpha}{\d t^\beta_b}=\d_x\Omega^{\alpha,0}_{\beta,b},\quad 1\le\alpha,\beta\le N,\, b\ge 0,
\end{gather}
which we call the \emph{Dubrovin--Zhang (DZ) hierarchy} associated to our partial CohFT. We say ``generalized PDEs'', because the right-hand sides are not differential polynomials, but elements of the larger algebra $\hcA^\wk_w$. The $N$-tuple $(w^{\top;1},\ldots,w^{\top;N})$ will be called the \emph{topological solution} of the DZ hierarchy.

\medskip

Conjecturally, $\Omega^{\alpha,a}_{\beta,b}$ are differential polynomials, which is proved when the partial CohFT is a semisimple CohFT~\cite[Theorem~22]{BPS12}. If, for a given partial CohFT, $\Omega^{\alpha,a}_{\beta,b}$ are differential polynomials, then the flows of the DZ hierarchy pairwise commute.

\medskip

\subsubsection{Polynomiality property and relations in the cohomology of $\oM_{g,n}$}

Denote
$$
\mcF^{\alpha,a}:=\eta^{\alpha\mu}\frac{\d\mcF}{\d t^\mu_a}\in\mbC[[t^*_*,\eps]],\quad \alpha\in[N],\,a\in\mbZ_{\ge 0}.
$$
By \cite[Theorem~4.6]{BS22}, there exists a unique differential polynomial $\tOmega^{\alpha,a}_{\beta,b}\in\hcA_{w;0}$ such that the difference
$$
\Omega^{\red;\alpha,a}_{\beta,b}:=\frac{\d\mcF^{\alpha,a}}{\d t^\beta_b}-\left.\tOmega^{\alpha,a}_{\beta,b}\right|_{w^\gamma_c=w^{\top;\gamma}_c}\in\mbC[[t^*_*,\eps]]
$$
satisfies the condition
$$
\Coef_{\eps^{2g}}\left.\frac{\d^n\Omega^{\red;\alpha,a}_{\beta,b}}{\d t^{\alpha_1}_{d_1}\cdots\d t^{\alpha_n}_{d_n}}\right|_{t^*_*=0}=0\qquad\begin{minipage}{10cm}
for any $g,n\ge 0$ and $\alpha_1,\ldots,\alpha_n\in[N]$, $d_1,\ldots,d_n\in\mbZ_{\ge 0}$\\
satisfying $\sum d_i\le 2g$.
\end{minipage}
$$
We have the following equivalences:
\begin{gather}\label{eq:equivalences}
\Omega^{\alpha,a}_{\beta,b}\in\hcA_w\quad\Leftrightarrow\quad \Omega^{\alpha,a}_{\beta,b}=\tOmega^{\alpha,a}_{\beta,b}\quad\Leftrightarrow\quad\Omega^{\red;\alpha,a}_{\beta,b}=0.
\end{gather}

\medskip

In~\cite[Section~2.3]{BS22}, the authors defined cohomology classes
$$
B^2_{g,(d_1,\ldots,d_n)}\in H^{2\sum d_i}(\oM_{g,n+2}),\quad g\ge 0,\,n\ge 1,\,d_1,\ldots,d_n\in\mbZ_{\ge 0},
$$
and proved that 
\begin{gather}\label{eq:Omegared and B-class}
\Coef_{\eps^{2g}}\left.\frac{\d^n\Omega^{\red;\alpha,a}_{\beta,b}}{\d t^{\alpha_1}_{d_1}\cdots\d t^{\alpha_n}_{d_n}}\right|_{t^*_*=0}=\int_{\oM_{g,n+2}}B^2_{g,(d_1,\ldots,d_n)}c_{g,n+2}\left(\otimes_{i=1}^n e_{\alpha_i}\otimes \eta^{\alpha\mu} e_\mu\otimes e_\beta\right)\psi_{n+1}^a\psi_{n+2}^b,
\end{gather}
for any $g\ge 0$, $n\ge 1$, $d_1,\ldots,d_n\ge 0$, and $1\le\alpha_1,\ldots,\alpha_n\le N$ satisfying $\sum d_i\ge 2g+1$. In~\cite[Conjecture~1]{BS22}, the authors conjectured that $B^2_{g,(d_1,\ldots,d_n)}=0$ for arbitrary $g\ge 0$, $n\ge 1$, and $d_1,\ldots,d_n\ge 0$ such that $\sum d_i\ge 2g+1$. If this conjecture is true, then $\Omega^{\alpha,a}_{\beta,b}$ are differential polynomials for an arbitrary partial CohFT.

\medskip

\subsection{The double ramification hierarchy}\label{subsection:DR hierarchy}

Let us recall important properties of the double ramification cycle. The restriction $\left.\DR_g(a_1,\ldots,a_n)\right|_{\cM^{\ct}_{g,n}}\in H^{2g}(\cM^{\ct}_{g,n})$ depends polynomially on the $a_i$-s, where by $\cM^\ct_{g,n}\subset\oM_{g,n}$ we denote the moduli space of curves of compact type (see, e.g.,~\cite{JPPZ17}). This implies that the class $\lambda_g\DR_g(a_1,\ldots,a_n)\in H^{4g}(\oM_{g,n})$ depends polynomially on the $a_i$-s. Moreover, the resulting polynomial (with the coefficients in $H^{4g}(\oM_{g,n})$) is homogeneous of degree $2g$. The polynomiality of the class $\DR_g(a_1,\ldots,a_n)\in H^{2g}(\oM_{g,n})$ is proved by A. Pixton and D. Zagier (we thank A. Pixton for informing us about that), but the proof is not published~yet.

\medskip

Consider again an arbitrary partial CohFT of rank $N$. Let $u^1,\ldots,u^N$ be formal variables and consider the associated ring of differential polynomials $\hcA_u$. Define differential polynomials $P^\alpha_{\beta,d}\in\hcA_{u;0}$, $\alpha,\beta\in[N]$, $d\in\mbZ_{\ge 0}$, by
\begin{align*}
P^\alpha_{\beta,d}:=&\eta^{\alpha\mu}\sum_{\substack{g,n\geq 0\\k_1,\ldots,k_n\geq 0\\\sum_{j=1}^n k_j=2g}}\frac{\eps^{2g}}{n!}\prod_{j=1}^n u^{\alpha_j}_{k_j}\times\\
&\times\Coef_{(a_1)^{k_1}\cdots(a_n)^{k_n}} \left(\int_{\oM_{g,n+2}}\DR_g\bigg(-\sum_{j=1}^n a_j,0,a_1,\ldots,a_n\bigg)\lambda_g \psi_2^d c_{g,n+2}(e_\mu\otimes e_\beta\otimes \otimes_{j=1}^n e_{\alpha_j}) \right).
\end{align*}
The \emph{double ramification (DR) hierarchy}~\cite{Bur15,BDGR18} is the following system of evolutionary PDEs with one spatial variable:
\begin{gather*}
\frac{\d u^\alpha}{\d t^\beta_d}=\d_x P^\alpha_{\beta,d},\quad 1\le \alpha,\beta\in[N],\,d\in\mbZ_{\ge 0}.
\end{gather*}
In~\cite[Proposition~9.1]{BDGR18}, the authors proved that all the flows of the DR hierarchy are compatible with each other.

\medskip

Consider the solution $(u^1(x,t^*_*,\eps),\ldots,u^N(x,t^*_*,\eps))$ of the DR hierarchy specified by the initial condition $u^\alpha(x,t^*_*,\eps)|_{t^*_*=0}=A^\alpha x$. Since $P^\alpha_{\un,0}=u^\alpha$, this solution has the form $u^\alpha(x,t^*_*,\eps)=u^{\str;\alpha}|_{t^\gamma_0\mapsto t^\gamma_0+A^\gamma x}$ for some formal power series $u^{\str;\alpha}\in\mbC[[t^*_*,\eps]]$. The $N$-tuple $(u^{\str;1},\ldots,u^{\str;N})$ will be called the \emph{string solution} of the DR hierarchy. We denote $u^{\str;\alpha}_n:=\frac{\d^n u^{\str;\alpha}}{(\d t^\un_0)^n}$.

\medskip

\subsection{A relation between the two hierarchies}\label{subsection:relation between two hierarchies}

Consider an arbitrary partial CohFT. By \cite[Theorem~4.6]{BS22}, there exists a unique differential polynomial $\tOmega^{\alpha,a}\in\hcA_{w;-1}$ such that the difference
$$
\Omega^{\red;\alpha,a}:=\mcF^{\alpha,a}-\left.\tOmega^{\alpha,a}\right|_{w^\gamma_c=w^{\top;\gamma}_c}\in\mbC[[t^*_*,\eps]]
$$
satisfies the condition
$$
\Coef_{\eps^{2g}}\left.\frac{\d^n\Omega^{\red;\alpha,a}}{\d t^{\alpha_1}_{d_1}\cdots\d t^{\alpha_n}_{d_n}}\right|_{t^*_*=0}=0\qquad\begin{minipage}{10cm}
for any $g,n\ge 0$ and $\alpha_1,\ldots,\alpha_n\in[N]$, $d_1,\ldots,d_n\in\mbZ_{\ge 0}$\\
satisfying $\sum d_i\le 2g-1$.
\end{minipage}
$$

\medskip

Consider the DR hierarchy associated to our partial CohFT. Let us introduce~$N$ formal power series $\mcF^{\DR;\alpha}\in\mbC[[t^*_*,\eps]]$, $1\le\alpha\le N$, by the relation
$$
\frac{\d\mcF^{\DR;\alpha}}{\d t^\beta_b}:=\left.P^\alpha_{\beta,b}\right|_{u^\gamma_n=u^{\str;\gamma}_n},
$$
with the constant terms defined to be equal to zero, $\left.\mcF^{\DR;\alpha}\right|_{t^*_*=0}:=0$. Clearly, $\frac{\d\mcF^{\DR;\alpha}}{\d t^\un_0}=u^{\str;\alpha}$.

\medskip

By \cite[Theorems~2.2,~4.6,~4.9]{BS22}, we have
\begin{gather}\label{eq:one-point equality}
\left.\frac{\d\Omega^{\red;\alpha,0}}{\d t^\beta_b}\right|_{t^*_*=0}=\left.\frac{\d\mcF^{\DR;\alpha}}{\d t^\beta_b}\right|_{t^*_*=0},\quad \alpha,\beta\in [N],\,b\in\mbZ_{\ge 0}.
\end{gather}
Conjecturally~\cite[Section~4.4.3]{BS22}, we have a much stronger statement: $\Omega^{\red;\alpha,0}=\mcF^{\DR;\alpha}$. 

\medskip

\subsection{Reconstruction of a hierarchy from the equation of one flow}\label{subsection:reconstruction from one flow}

Consider an integer $N\ge 1$ and the algebra of differential polynomials $\hcA_w$ associated to formal variables $w^1,\ldots,w^N$.

\medskip

\begin{proposition}\label{proposition:reconstruction}
Consider a compatible system of PDEs
\begin{gather}\label{eq:system for reconstruction}
\frac{\d w^\alpha}{\d t^\beta_b}=\d_x P^\alpha_{\beta,b},\quad\alpha,\beta\in[N],\, b\in\mbZ_{\ge 0},\quad P^\alpha_{\beta,b}\in\hcA_{w;0},
\end{gather}
and a nonzero $N$-tuple $(A^1,\ldots,A^N)\in\mbC^N$ satisfying
$$
\left.P^{\alpha}_{\beta,b}\right|_{w^*_*=\eps=0}=0,\qquad
P^\alpha_{\un,0}=w^\alpha,\qquad
\frac{\d P^\alpha_{\beta,b}}{\d w^\un}=
\begin{cases}
P^\alpha_{\beta,b-1},&\text{if $b\ge 1$},\\
\delta^\alpha_\beta,&\text{if $b=0$},
\end{cases}
$$
where $P^\alpha_{\un,0}:=A^\gamma P^\alpha_{\gamma,0}$ and $\frac{\d}{\d w^\un}:=A^\gamma\frac{\d}{\d w^\gamma}$. Then all the differential polynomials $P^\alpha_{\beta,b}$ are uniquely determined by the $N$-tuple of differential polynomials $(P^1_{\un,1},\ldots P^N_{\un,1})$.
\end{proposition}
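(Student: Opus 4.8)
The plan is to recast the statement as a uniqueness assertion: if two families $\{P^\alpha_{\beta,b}\}$ and $\{\tP^\alpha_{\beta,b}\}$ both satisfy \eqref{eq:system for reconstruction} together with the three listed normalizations, and if they share the same special flow $P^\alpha_{\un,1}=\tP^\alpha_{\un,1}$, then they coincide. Writing $Q^\alpha_{\beta,b}:=P^\alpha_{\beta,b}-\tP^\alpha_{\beta,b}$, I aim to show that every $Q^\alpha_{\beta,b}$ vanishes.

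First I would record two structural identities. Let $D:=\sum_{n\ge 0}(\d_x^{n+1}P^\gamma_{\un,1})\frac{\d}{\d w^\gamma_n}$ be the derivation of $\hcA_w$ generating the special flow $\frac{\d}{\d t^\un_1}$, and recall $\frac{\d}{\d w^\un}=A^\gamma\frac{\d}{\d w^\gamma}$. Since $\frac{\d}{\d w^\un}$ is a constant-coefficient derivation commuting with $\d_x$, and since $\frac{\d P^\gamma_{\un,1}}{\d w^\un}=P^\gamma_{\un,0}=w^\gamma$, a one-line computation gives the Heisenberg-type relation
\begin{equation*}
\Big[\tfrac{\d}{\d w^\un},\,D\Big]=\d_x .
\end{equation*}
Second, writing out the compatibility of the special flow with an arbitrary flow $\frac{\d}{\d t^\beta_b}$, and stripping one $\d_x$ (legitimate because $\ker\d_x=\mbC[[\eps]]$ while both sides vanish at $w^*_*=0$ by the first normalization), yields $D\,P^\alpha_{\beta,b}=\sum_{n\ge 0}(\d_x^{n+1}P^\gamma_{\beta,b})\frac{\d P^\alpha_{\un,1}}{\d w^\gamma_n}$. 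Subtracting the two copies of this identity for $P$ and $\tP$ — the operator $D$ being the same for both, as it depends only on the common special flow — gives the \emph{linear} relation
\begin{equation*}
D\,Q^\alpha_{\beta,b}=\sum_{n\ge 0}\big(\d_x^{n+1}Q^\gamma_{\beta,b}\big)\frac{\d P^\alpha_{\un,1}}{\d w^\gamma_n}.
\end{equation*}

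The core is then an induction on the descendent index $b$. Assuming $Q^\alpha_{\beta,b-1}=0$ for all $\alpha,\beta$ (for $b=0$ this role is played directly by $\frac{\d P^\alpha_{\beta,0}}{\d w^\un}=\delta^\alpha_\beta$, common to $P$ and $\tP$), the string-type axiom forces $\frac{\d Q^\alpha_{\beta,b}}{\d w^\un}=Q^\alpha_{\beta,b-1}=0$, so $Q^\alpha_{\beta,b}$ is independent of $w^\un$. I would first settle the dispersionless case $\eps=0$, where each $Q^\alpha_{\beta,b}$ is a power series $q^\alpha(w_0)$. There the linear relation reads $(\d_x p^\delta_{\un,1})\frac{\d q^\alpha}{\d w^\delta_0}=(\d_x q^\gamma)\frac{\d p^\alpha_{\un,1}}{\d w^\gamma_0}$, an identity in $\hcA_w$; substituting the jet value $w^\rho_1\mapsto A^\rho$ turns its left-hand side into $w^\delta\frac{\d q^\alpha}{\d w^\delta_0}$ (using $\frac{\d p^\delta_{\un,1}}{\d w^\un}=w^\delta$) and its right-hand side into $\frac{\d q^\gamma}{\d w^\un}\frac{\d p^\alpha_{\un,1}}{\d w^\gamma_0}=0$. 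Hence the Euler operator annihilates $q^\alpha$, and since $q^\alpha$ vanishes at the origin it must vanish identically, completing the inductive step in the dispersionless case.

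It remains to lift this to the full dispersive setting, which I would handle by a secondary induction on the order in $\eps$. Granting that all lower-order components of $Q$ vanish, the top-order component obeys a relation of exactly the dispersionless shape, and one again wants to contract against $A$ and invoke injectivity of the scaling operator $E:=\sum_{i\ge 0}w^\gamma_i\frac{\d}{\d w^\gamma_i}$, which is injective on the differential polynomials of positive degree occurring here. The main obstacle lies precisely here: because the special flow $P^\alpha_{\un,1}$ is \emph{not} homogeneous for $E$, the clean operator identities of the $\eps=0$ case do not persist verbatim, and the jet substitution $w^\rho_1\mapsto A^\rho$ no longer reproduces $E$ on the nose but mixes in the higher-jet derivatives $\frac{\d}{\d w^\gamma_i}$ with $i\ge 1$. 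Controlling these contributions — for which I would exploit the identity $\sum_{i\ge 0} i\,w^\gamma_i\frac{\d}{\d w^\gamma_i}=\eps\frac{\d}{\d\eps}$ valid on $\hcA_{w;0}$ to reorganize the higher-jet terms — is the delicate bookkeeping that genuinely extends the finite computation of~\cite[Section~5.2]{BG16}, and is where the argument must be carried out with care.
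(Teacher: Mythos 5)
Your skeleton is sound: the reformulation as a uniqueness statement for $Q^\alpha_{\beta,b}:=P^\alpha_{\beta,b}-\tP^\alpha_{\beta,b}$, the linear relation $D\,Q^\alpha_{\beta,b}=\sum_{n\ge 0}(\d_x^{n+1}Q^\gamma_{\beta,b})\frac{\d P^\alpha_{\un,1}}{\d w^\gamma_n}$ extracted from compatibility (stripping $\d_x$ is indeed legitimate, since every term on both sides contains a factor of the form $\d_x^{n+1}(\cdot)$ and hence vanishes at $w^*_*=0$), and the dispersionless step are all correct as far as they go. But the proof stops exactly where the content of the proposition lies. At order $\eps^g$, $g\ge 1$, with all lower orders assumed to vanish, your relation reads
\begin{equation*}
\sum_{n\ge 0}\left(\d_x^{n+1}P^{\gamma,[0]}_{\un,1}\right)\frac{\d Q^{\alpha,[g]}_{\beta,b}}{\d w^\gamma_n}
=\sum_{n\ge 0}\left(\d_x^{n+1}Q^{\gamma,[g]}_{\beta,b}\right)\frac{\d P^{\alpha,[0]}_{\un,1}}{\d w^\gamma_n},
\end{equation*}
where $[g]$ denotes the coefficient of $\eps^g$; this says precisely that $\d_x Q^{[g]}_{\beta,b}$ is an evolutionary symmetry of the dispersionless special flow. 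What your plan therefore requires is the lemma: \emph{any differential-polynomial symmetry $\d_x R^\alpha$ of the dispersionless special flow with $R^\alpha\in\cA_{w;g}$ homogeneous of differential degree $g\ge 1$ and $\frac{\d R^\alpha}{\d w^\un}=0$ must vanish}. You do not prove this; you explicitly defer it as ``delicate bookkeeping''. It is not a routine verification: the Euler-operator mechanism that settles $\eps=0$ breaks down (as you yourself note, the substitution $w^\rho_1\mapsto A^\rho$ now hits the derivatives $\frac{\d}{\d w^\gamma_i}$, $i\ge 1$, paired with nonlinear jet monomials coming from $\d_x^{n+1}P^{\gamma,[0]}_{\un,1}$), and dispersionless flows of this type have very large symmetry algebras --- already for $N=1$, where $P^{[0]}_{\un,1}=\frac{(w^1)^2}{2}$, every flow $f(w)w_x$ commutes with the special flow --- so the conclusion hinges delicately on the combination of polynomiality, homogeneity, and the constraint $\frac{\d R}{\d w^\un}=0$. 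One can verify the lemma by hand for $g=1,2$, but a general argument is missing, and since the positive $\eps$-orders are the whole difficulty of the proposition, this is a genuine gap.

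It is instructive to contrast this with the paper's proof, which avoids the symmetry-classification problem altogether. The paper works with the solution $w^{\sp;\alpha}$ of the system determined by the initial condition $\left.w^{\sp;\alpha}\right|_{t^*_*=0}=A^\alpha x$, establishes string- and dilaton-type identities for it, and observes via property~\eqref{eq:property of special} that the differential polynomials $P^\alpha_{\beta,b}$ are recoverable from this single solution. The dilaton identity combined with the special-flow equation then yields the recursion~\eqref{eq:dilaton plus special flow}, which computes every Taylor coefficient of $\left.w^{\sp;\alpha}\right|_{x=0}$ from coefficients with smaller multi-index --- and crucially the power of $\eps$ is one of the induction parameters (the ``order''), so all dispersive orders are handled uniformly by the same recursion. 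To complete your argument you must either prove the symmetry-vanishing lemma above, or switch to this solution-based recursion.
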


\medskip

The flow $\frac{\d}{\d t^\un_1}$ will be called the \emph{special flow} of the system~\eqref{eq:system for reconstruction}.

\medskip

\begin{proof}
The proof strongly uses the ideas from~\cite[Section~5.1 and~5.2]{BG16}. Consider the solution $(w^{\sp;1},\ldots,w^{\sp;N})$ of the system~\eqref{eq:system for reconstruction} given by the initial condition~$\left.w^{\sp;\alpha}\right|_{t^*_*=0}=A^\alpha x$. 

\medskip

\begin{lemma}
We have
\begin{align}
&\frac{\d w^{\sp;\alpha}}{\d t^\un_0}-\sum_{n\ge 0}t^\gamma_{n+1}\frac{\d w^{\sp;\alpha}}{\d t^\gamma_n}=A^\alpha,\label{eq:general string}\\
&\frac{\d w^{\sp;\alpha}}{\d t^\un_1}-\eps\frac{\d w^{\sp;\alpha}}{\d\eps}-x\frac{\d w^{\sp;\alpha}}{\d x}-\sum_{n\ge 0}t^\gamma_n\frac{\d w^{\sp;\alpha}}{\d t^\gamma_n}=0,\label{eq:general dilaton}\\
&\left.w^{\sp;\alpha}_n\right|_{x=0}=t^\alpha_n+\delta_{n,1}A^\alpha+Q^\alpha_n+O(\eps)\quad \text{for some $Q^\alpha_n\in\mbC[[t^*_*]]^{(n+1)}$},\label{eq:property of special}\\
&\left.\frac{\d w^{\sp;\alpha}}{\d t^\beta_b}\right|_{x=t^*_*=\eps=0}=\delta^\alpha_\beta\delta_{b,0}.\label{eq:simplest coefficient of special}
\end{align}
\end{lemma}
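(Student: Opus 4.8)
The plan is to deduce all four identities from the compatibility of \eqref{eq:system for reconstruction} together with a single uniqueness principle for its linearization. First I would record the one immediate consequence of the hypotheses that is used repeatedly: since $P^\alpha_{\un,0}=w^\alpha$, on the special solution the flow $\frac{\d}{\d t^\un_0}$ coincides with $\d_x$, i.e. $\frac{\d w^{\sp;\alpha}}{\d t^\un_0}=\d_x P^\alpha_{\un,0}=\d_x w^{\sp;\alpha}$. Next I would isolate the tool used for \eqref{eq:general string} and \eqref{eq:general dilaton}: if an $N$-tuple $(G^1,\dots,G^N)$ satisfies the linearized system $\frac{\d G^\alpha}{\d t^\beta_b}=\d_x\big(\sum_{k\ge0}\frac{\d P^\alpha_{\beta,b}}{\d w^\gamma_k}\big|_{w=w^{\sp}}\d_x^k G^\gamma\big)$ for all $\beta,b$ and vanishes at $t^*_*=0$, then $G^\alpha\equiv0$; this holds because these equations determine every Taylor coefficient of $G^\alpha$ in the $t$-variables recursively from its value at $t^*_*=0$.

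For \eqref{eq:general string} I would put $\mathcal{S}:=\frac{\d}{\d t^\un_0}-\sum_{n\ge0}t^\gamma_{n+1}\frac{\d}{\d t^\gamma_n}$ and $G^\alpha:=\mathcal{S}w^{\sp;\alpha}-A^\alpha$. From $[\frac{\d}{\d t^\beta_b},\mathcal{S}]=-\frac{\d}{\d t^\beta_{b-1}}$ (and $0$ when $b=0$), $[\mathcal{S},\d_x]=0$, and the chain rule, $\frac{\d}{\d t^\beta_b}(\mathcal{S}w^{\sp;\alpha})$ equals the linearized operator applied to $\mathcal{S}w^{\sp}$ plus the source $A^\gamma\frac{\d P^\alpha_{\beta,b}}{\d w^\gamma}=\frac{\d P^\alpha_{\beta,b}}{\d w^\un}=P^\alpha_{\beta,b-1}$, whose $\d_x$ is exactly cancelled by the shift term $-\frac{\d w^{\sp;\alpha}}{\d t^\beta_{b-1}}=-\d_x P^\alpha_{\beta,b-1}$ produced by the commutator; hence $G^\alpha$ solves the linearized system. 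Since $\frac{\d w^{\sp;\alpha}}{\d t^\un_0}=\d_x w^{\sp;\alpha}=A^\alpha$ at $t^*_*=0$ (where $w^{\sp;\alpha}=A^\alpha x$) and the shift term vanishes there, $G^\alpha|_{t^*_*=0}=0$, and uniqueness gives \eqref{eq:general string}.

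The same scheme proves \eqref{eq:general dilaton}, with $\mathcal{D}:=\frac{\d}{\d t^\un_1}-\eps\frac{\d}{\d\eps}-x\d_x-\sum_{n\ge0}t^\gamma_n\frac{\d}{\d t^\gamma_n}$, $H^\alpha:=\mathcal{D}w^{\sp;\alpha}$, and the commutators $[\frac{\d}{\d t^\beta_b},\mathcal{D}]=-\frac{\d}{\d t^\beta_b}$, $[\d_x,\mathcal{D}]=-\d_x$. I expect the main obstacle here: when $\mathcal{D}$ is applied to $P^\alpha_{\beta,b}$ evaluated on $w^{\sp}$, the chain rule produces, besides the linearized term in $H$, the combination $\sum_{k}kw^\gamma_k\frac{\d P^\alpha_{\beta,b}}{\d w^\gamma_k}-\eps\frac{\d P^\alpha_{\beta,b}}{\d\eps}$, which vanishes precisely because $P^\alpha_{\beta,b}\in\hcA_{w;0}$ is homogeneous of degree $0$ (Euler's identity with $\deg\eps=-1$, $\deg w^\gamma_k=k$); this is exactly what makes $H^\alpha$ solve the homogeneous linearized system. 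For the initial value, $\frac{\d w^{\sp;\alpha}}{\d t^\un_1}\big|_{t^*_*=0}=\d_x P^\alpha_{\un,1}(w^{\sp})\big|_{t^*_*=0}$, and differentiating in $x$ at the point $w^\gamma=A^\gamma x$ while using $\frac{\d P^\alpha_{\un,1}}{\d w^\un}=P^\alpha_{\un,0}=w^\alpha$ yields $A^\alpha x$; hence $H^\alpha|_{t^*_*=0}=A^\alpha x-x\cdot A^\alpha=0$ and uniqueness gives \eqref{eq:general dilaton}.

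Finally I would obtain \eqref{eq:simplest coefficient of special} and \eqref{eq:property of special} by direct computation. For \eqref{eq:simplest coefficient of special}, at $t^*_*=\eps=0$ one has $w^{\sp;\gamma}=A^\gamma x$ and $P^\alpha_{\beta,b}|_{\eps=0}$ depends only on $w^\gamma_0$, so $\frac{\d w^{\sp;\alpha}}{\d t^\beta_b}=\d_x P^\alpha_{\beta,b}(w^{\sp})$ reduces at $x=0$ to $\frac{\d P^\alpha_{\beta,b}|_{\eps=0}}{\d w^\un}\big|_{w=0}$, which is $\delta^\alpha_\beta$ for $b=0$ and $P^\alpha_{\beta,b-1}|_{w^*_*=\eps=0}=0$ for $b\ge1$. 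For \eqref{eq:property of special}, writing $T:=\sum_{n\ge0}t^\gamma_{n+1}\frac{\d}{\d t^\gamma_n}$, the string equation and $\frac{\d}{\d t^\un_0}=\d_x$ give $\d_x w^{\sp;\alpha}=A^\alpha+Tw^{\sp;\alpha}$, whence, since $[\d_x,T]=0$, $w^{\sp;\alpha}_n|_{x=0}=\delta_{n,1}A^\alpha+T^n\big(w^{\sp;\alpha}|_{x=0}\big)$ for $n\ge1$. As $T$ raises the grading $\sum_id_i$ by exactly one, it suffices to treat $n=0$, i.e. to show that the grading-$0$ part of $w^{\sp;\alpha}|_{x=0,\eps=0}$ (its restriction to $t^\gamma_{\ge1}=0$) is $t^\alpha_0$: under $t^\gamma_{\ge1}=0$ the operator $T$ kills $w^{\sp}$, so the string equation reduces to $\d_x w^{\sp;\alpha}=A^\alpha$, giving $w^{\sp;\alpha}=A^\alpha x+c^\alpha(t^*_0)$, and then the flow $\frac{\d}{\d t^\beta_0}$ together with $\frac{\d P^\alpha_{\beta,0}}{\d w^\un}=\delta^\alpha_\beta$ gives $\frac{\d c^\alpha}{\d t^\beta_0}=\delta^\alpha_\beta$, so $c^\alpha=t^\alpha_0$ after fixing the constant by $w^{\sp;\alpha}|_{x=t^*_*=\eps=0}=0$. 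Applying $T^n$ then yields $Q^\alpha_n\in\mbC[[t^*_*]]^{(n+1)}$.
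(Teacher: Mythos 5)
Your proof is correct, and it takes a genuinely more self-contained route than the paper's. Where the paper introduces potential variables $v^\alpha$ (via $w^\alpha_d=v^\alpha_{d+1}$), invokes Lemma~4.7 of \cite{Bur15} to obtain the stronger potential-level string equation \eqref{eq:string for v}, and then cites Proposition~5.1 of \cite{BG16} for the dilaton equation, you prove \eqref{eq:general string} and \eqref{eq:general dilaton} directly by a single linearization-plus-uniqueness scheme: the defect solves the homogeneous linearized system along $w^{\sp}$ and vanishes at $t^*_*=0$, hence vanishes identically (legitimate, since the linearized equations determine all Taylor coefficients in the $t$-variables recursively). Your key computational points check out: in the string case the commutator term $-\frac{\d w^{\sp;\alpha}}{\d t^\beta_{b-1}}$ cancels against $\d_x P^\alpha_{\beta,b-1}$ produced by $\frac{\d P^\alpha_{\beta,b}}{\d w^\un}=P^\alpha_{\beta,b-1}$, and in the dilaton case the leftover chain-rule term $\sum_k k\,w^\gamma_k\frac{\d P^\alpha_{\beta,b}}{\d w^\gamma_k}-\eps\frac{\d P^\alpha_{\beta,b}}{\d\eps}$ vanishes by Euler's identity because $P^\alpha_{\beta,b}\in\hcA_{w;0}$ --- which is essentially the content of the two cited proofs, here spelled out. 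The price is paid at \eqref{eq:property of special}: the paper reads off the $n=0$ case exactly, to all orders in $\eps$, from \eqref{eq:string for v} (namely $w^{\sp;\alpha}|_{t^*_{\ge1}=0}=t^\alpha_0+A^\alpha x$), while you must supply a substitute --- restricting to $t^\gamma_{\ge1}=0$, integrating the flows $\frac{\d}{\d t^\beta_0}$ at $\eps=0$ using $\frac{\d P^\alpha_{\beta,0}}{\d w^\un}=\delta^\alpha_\beta$, and fixing the constant from the initial condition --- which suffices because \eqref{eq:property of special} is only asserted modulo $O(\eps)$. The passage from $n=0$ to general $n$ (iterating $\sum_{k\ge0}t^\gamma_{k+1}\frac{\d}{\d t^\gamma_k}$) coincides with the paper's, and your verification of \eqref{eq:simplest coefficient of special} is a direct computation equivalent to the paper's (you evaluate at $w^{\sp;\gamma}|_{t^*_*=0}=A^\gamma x$ and use that $P^\alpha_{\beta,b}|_{\eps=0}$ has differential degree zero, rather than quoting \eqref{eq:property of special}). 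In sum: your version is longer but needs no external references and no potential variables; the paper's is shorter by citation and yields a slightly stronger intermediate identity.
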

\begin{proof}
Consider formal variables $v^\alpha$, $\alpha\in[N]$, together with the associated variables $v^\alpha_d$ with $d\ge 0$, and let us relate them to the variables $w^\beta_q$ by $w^\alpha_d=v^\alpha_{d+1}$. From the compatibility of the system~\eqref{eq:system for reconstruction} it follows that the system
\begin{gather*}
\frac{\d v^\alpha}{\d t^\beta_b}=P^\alpha_{\beta,b},\quad\alpha,\beta\in[N],\, b\in\mbZ_{\ge 0},
\end{gather*}
is also compatible. We consider its solution $(v^{\sp;1},\ldots,v^{\sp;N})$ given by the initial condition~$\left.v^{\sp;\alpha}\right|_{t^*_*=0}=A^\alpha \frac{x^2}{2}$. Clearly $\d_x v^{\sp;\alpha}=\frac{\d v^{\sp;\alpha}}{\d t^\un_0}=w^{\sp;\alpha}$. 

\medskip

We claim that 
\begin{gather}\label{eq:string for v}
\frac{\d v^{\sp;\alpha}}{\d t^\un_0}-\sum_{n\ge 0}t^\gamma_{n+1}\frac{\d v^{\sp;\alpha}}{\d t^\gamma_n}=t^\alpha_0+A^\alpha x,
\end{gather}
this is proved similarly to Lemma~4.7 from~\cite{Bur15}. Applying $\d_x$ to this equation, we obtain equation~\eqref{eq:general string}. Note that equation~\eqref{eq:string for v} implies that $\left.w^{\sp;\alpha}\right|_{t^*_{\ge 1}=0}=t^\alpha_0+A^\alpha x$, and therefore equation~\eqref{eq:property of special} is true for $n=0$. Equation~\eqref{eq:general string} implies that
$$
\left.w^{\sp;\alpha}_n\right|_{x=0}=\delta_{n,1}A^\alpha+\left(\sum_{k\ge 0}t^\gamma_{k+1}\frac{\d}{\d t^\gamma_k}\right)^n(w^{\sp;\alpha}|_{x=0}),
$$
and therefore equation~\eqref{eq:property of special} is true for all $n\ge 0$.

\medskip

Equation~\eqref{eq:general dilaton} is proved similarly to Proposition 5.1 from~\cite{BG16}.

\medskip

To prove~\eqref{eq:simplest coefficient of special}, we use~\eqref{eq:property of special} and obtain
$$
\left.\frac{\d w^{\sp;\alpha}}{\d t^\beta_b}\right|_{x=t^*_*=\eps=0}=\left.\left(\left.\d_x P^\alpha_{\beta,b}\right|_{w^\gamma_r=w^{\sp;\gamma}_r}\right)\right|_{x=t^*_*=\eps=0}=\left.\frac{\d P^\alpha_{\beta,b}}{\d w^\gamma}\right|_{w^*_*=\eps=0}A^\gamma=\left.\frac{\d P^\alpha_{\beta,b}}{\d w^\un}\right|_{w^*_*=\eps=0}=\delta^\alpha_\beta\delta_{b,0}.
$$
\end{proof}

\medskip

The property~\eqref{eq:property of special} implies that all the differential polynomials $P^\alpha_{\beta,b}$ are uniquely determined by the solution $(w^{\sp;1},\ldots,w^{\sp;N})$ of the system~\eqref{eq:system for reconstruction} (see the discussion in Section~\ref{subsection:DZ hierarchy}). So it remains to prove that this solution is uniquely determined by the differential polynomials $P^\alpha_{\un,1}$, $1\le\alpha\le N$. Since $\frac{\d w^{\sp;\alpha}}{\d x}=\frac{\d w^{\sp;\alpha}}{\d t^\un_0}$, it is sufficient to determine the formal power series $\left.w^{\sp;\alpha}\right|_{x=0}$. 

\medskip

Denote by $c^{\alpha;\alpha_1,\ldots,\alpha_n}_{k;d_1,\ldots,d_n}$ the coefficient of $\eps^k t^{\alpha_1}_{d_1}\cdots t^{\alpha_n}_{d_n}$ in $\left.w^{\sp;\alpha}\right|_{x=0}$. We will call these coefficients \emph{$c$-coefficients}. We will say that the multi-index of the coefficient $c^{\alpha;\alpha_1,\ldots,\alpha_n}_{k;d_1,\ldots,d_n}$, comparing with the multi-index of another coefficient~$c^{\beta;\beta_1,\ldots,\beta_m}_{l;q_1,\ldots,q_m}$,
\begin{itemize}
\item has \emph{smaller order} if $k<l$;

\smallskip

\item has \emph{smaller length} if $n<m$;

\smallskip

\item has \emph{smaller weight} if $\sum d_i<\sum q_j$;

\smallskip

\item is \emph{smaller} if $k<l$ or $(k=l\text{ and }n<m)$ or $\left(k=l, n=m,\text{ and }\sum d_i<\sum q_j\right)$.
\end{itemize}

\medskip

Equation~\eqref{eq:general dilaton} together with the equations for the flow $\frac{\d}{\d t^\un_1}$ imply that
\begin{gather}\label{eq:dilaton plus special flow}
\left(\eps\frac{\d}{\d\eps}+\sum_{r\ge 0}t^\gamma_r\frac{\d}{\d t^\gamma_r}\right)\left(\left.w^{\sp;\alpha}\right|_{x=0}\right)=\left.\left(\left.\d_x P^\alpha_{\un,1}\right|_{w^\gamma_r=w^{\sp;\gamma}_r}\right)\right|_{x=0}.
\end{gather}
Let us show that this equation allows to compute all the $c$-coefficients recursively. Indeed, the coefficient of $\eps^k t^{\alpha_1}_{d_1}\cdots t^{\alpha_n}_{d_n}$ on the left-hand side of~\eqref{eq:dilaton plus special flow} is equal to $(k+n)c^{\alpha;\alpha_1,\ldots,\alpha_n}_{k;d_1,\ldots,d_n}$. Let us look now at the coefficient of $\eps^k t^{\alpha_1}_{d_1}\cdots t^{\alpha_n}_{d_n}$ on the right-hand side of~\eqref{eq:dilaton plus special flow}. Note that the coefficient of $\eps^k t^{\alpha_1}_{d_1}\cdots t^{\alpha_n}_{d_n}$ in
\begin{enumerate}
\item $\left.w^{\sp;\alpha}_l\right|_{x=0}$, $l>0$, is a linear combination of the $c$-coefficients with multi-index of order $k$ and length $n$, but with weight less than $\sum d_i$ (this follows from equation~\eqref{eq:general string});

\smallskip

\item $\left.\left(\eps^l w^{\sp;\beta_1}_{q_1}\cdots w^{\sp;\beta_m}_{q_m}\right)\right|_{x=0}$, $l>0$, is a polynomial in the $c$-coefficients with multi-index of order less than $k$;

\smallskip

\item $\left.\left(w^{\sp;\beta_1}\cdots w^{\sp;\beta_m}w^{\sp;\beta}_x\right)\right|_{x=0}$, $m\ge 2$, is a polynomial in the $c$-coefficients with multi-index of order less than $k$, or of order $k$ but length less than $n$;

\smallskip

\item $\left.\left(w^{\sp;\beta_1}w^{\sp;\beta}_x\right)\right|_{x=0}$, is equal to $A^\beta c^{\beta_1;\alpha_1,\ldots,\alpha_n}_{k;d_1,\ldots,d_n}$ plus a polynomial in the $c$-coefficients with multi-index of order less than $k$, or of order $k$ but length less than $n$.
\end{enumerate}
Since $\frac{\d P^\alpha_{\un,1}}{\d w^\gamma}A^\gamma=\frac{\d P^\alpha_{\un,1}}{\d w^\un}=P^\alpha_{\un,0}=w^\alpha$, we conclude that the coefficient of $\eps^k t^{\alpha_1}_{d_1}\cdots t^{\alpha_n}_{d_n}$ on the right-hand side of~\eqref{eq:dilaton plus special flow} is equal to $c^{\alpha;\alpha_1,\ldots,\alpha_n}_{k;d_1,\ldots,d_n}$ plus a polynomial in the $c$-coefficients with smaller multi-index. Therefore, equation~\eqref{eq:dilaton plus special flow} allows to compute the coefficient $c^{\alpha;\alpha_1,\ldots,\alpha_n}_{k;d_1,\ldots,d_n}$ in terms of the $c$-coefficients with smaller multi-index unless $k+n=1$. It remains to note that we already know these exceptional coefficients: $c^\alpha_1=0$ and $c^{\alpha;\alpha_1}_{0;d_1}=\delta^{\alpha,\alpha_1}\delta_{d_1,0}$.
\end{proof}

\medskip

Note that the DR hierarchy corresponding to an arbitrary partial CohFT satisfies the assumptions of the proposition. The same is true for the DZ hierarchy, if $\Omega^{\alpha,0}_{\beta,b}$ are differential polynomials.

\medskip

\subsection{Comments about the infinite rank case}\label{subsection:comments about infinite rank}

As it was already explained in~\cite{BR21a,BR22}, a notion of infinite rank partial CohFT (i.e. a partial CohFT with an infinite dimensional phase space~$V$) requires some care, because one needs to clarify what is meant by the matrix~$(\eta^{\alpha\beta})$ and to make sense of the, a priori infinite, sum over $\mu$ and $\nu$, both appearing in Axiom (iii). One way to give a rigorous definition is the following. Consider a vector space~$V$ with basis labeled by integers, $V=\mathrm{span}(\{e_\alpha\}_{\alpha\in\mbZ})$, and suppose that for any~$(g,n)$ in the stable range and each $\alpha_1,\ldots,\alpha_{n-1} \in \mbZ$ the set $\{\beta\in\mbZ\, |\, c_{g,n}(\otimes_{i=1}^{n-1} e_{\alpha_i}\otimes e_\beta)\neq 0\}$ is finite. In particular, this implies that the matrix $(\eta_{\alpha\beta})$ is row- and column-finite (each row and each column have a finite number of nonzero entries), which is equivalent to $\eta^\sharp(V)\subseteq \mathrm{span}(\{e^\alpha\}_{\alpha \in \mbZ})$, where $\eta^\sharp\colon V\to V^*$ is the injective map induced by the bilinear form $\eta$ and $\{e^\alpha\}_{\alpha \in \mbZ}$ is the dual ``basis''. Let us further demand that the injective map $\eta^\sharp\colon V \to \mathrm{span}(\{e^\alpha\}_{\alpha \in \mbZ})$ is surjective too, i.e. that a unique two-sided row- and column-finite matrix $(\eta^{\alpha\beta})$, inverse to $(\eta_{\alpha\beta})$, exists (it represents the inverse map $(\eta^\sharp)^{-1}\colon\mathrm{span}(\{e^\alpha\}_{\alpha \in \mbZ})\to V$). Then the equation appearing in Axiom (iii) is well defined with the double sum only having a finite number of nonzero terms. Such a partial CohFT will be called a {\it tame partial CohFT of infinite rank}.

\medskip

Note that if a tame partial CohFT of infinite rank satisfies the condition
\begin{gather}\label{eq:condition for partial CohFT}
\begin{minipage}{10cm}
for any $(g,n)$ in the stable range and $\alpha_1,\ldots,\alpha_n\in\mbZ$, the class $c_{g,n}(\otimes_{i=1}^n e_{\alpha_i})$ is zero unless $\sum\alpha_i=0$,
\end{minipage}
\end{gather}
then $e=\lambda e_0$ for some $\lambda\in\mbC^*$, the matrices $(\eta_{\alpha\beta})$ and $(\eta^{\alpha\beta})$ are antidiagonal, and the double sum on the right-hand side of the equation in Axiom (iii) has at most one nonzero term.

\medskip

Let us now make comments regarding the theory of evolutionary PDEs with infinitely many dependent variables. By a \emph{differential polynomial in infinitely many variables} $w^\alpha$, $\alpha\in\mbZ$, we mean an element $f\in\mbC[[w^*_*]]$ consisting of an infinite linear combination of monomials $w^{\alpha_1}_{d_1}\cdots w^{\alpha_n}_{d_n}$ such that $\sum d_i\le m$ for some $m\in\mbZ_{\ge 0}$. The algebra of such differential polynomials will be denoted by $\cA_w$. Let $\hcA_w:=\cA_w[[\eps]]$. As in the case of finitely many dependent variables, we can consider systems of evolutionary PDEs $\frac{\d w^\alpha}{\d t}=P^\alpha$, $\alpha\in\mbZ$, $P^\alpha\in\hcA_w$. However, in general, defining the compatibility of this system with another system $\frac{\d w^\alpha}{\d s}=Q^\alpha$ can be problematic. Indeed, consider the example $P^\alpha=\sum_{\gamma\in\mbZ}w^\gamma$ and $Q^\beta=w^0$. Then we see that the sum $\sum_{\substack{\beta\in\mbZ\\n\in\mbZ_{\ge 0}}}\left(\frac{\d P^\alpha}{\d w^\beta_n}\d_x^n Q^\beta-\frac{\d Q^\alpha}{\d w^\beta_n}\d_x^n P^\beta\right)$ is not well defined.

\medskip

In order to overcome this problem, let us consider systems of PDEs of special form. Let us introduce an additional grading $\tdeg$ on~$\hcA_w$ by $\tdeg w^\alpha_d:=\alpha$ and $\tdeg\eps:=0$. We will say that a system of evolutionary PDEs  $\frac{\d w^\alpha}{\d t}=P^\alpha$ is \emph{homogeneous} if $\tdeg P^\alpha=\alpha$. It is easy to check that the notion of compatible homogeneous system of PDEs is well defined. Note that the noncommutative KdV hierarchy, viewed as a system of evolutionary PDEs with one spatial variable and infinitely many dependent variables, is homogeneous.

\medskip

Consider now a tame partial CohFT of infinite rank satisfying~\eqref{eq:condition for partial CohFT}. Then it is easy to check that the DZ and DR hierarchies are well defined for such a partial CohFT. Moreover, the DR hierarchy is homogeneous, the DZ hierarchy is also homogeneous if $\Omega^{\alpha,0}_{\beta,b}$ are differential polynomials, and all the results from Sections~\ref{subsection:DZ hierarchy}--\ref{subsection:relation between two hierarchies} remain true. Regarding Proposition~\ref{proposition:reconstruction}, it remains true, with the same proof, if we require that the system under consideration is homogeneous and $A^\alpha=\delta^{\alpha,0}$.


\section{Proof of Theorem~\ref{theorem:main}}\label{section:proof of the main theorem}

Let $V$ be a vector space with basis $e_a$ indexed by integers $a\in\mbZ$. Consider a family of linear maps
$$
c^\P_{g,n}\colon V^{\otimes n}\to H^{\even}(\oM_{g,n})\otimes\mbC[\mu]
$$
given by
$$
c^\P_{g,n}\left(\otimes_{i=1}^n e_{a_i}\right):=\sum_{d=0}^g 2^{-d}\mu^{2d}P_g^d(a_1,\ldots,a_n),\quad a_1,\ldots,a_n\in\mbZ.
$$
By~\cite[Proposition~4.6]{BR22}, the maps $c_{g,n}^\P$ form a tame partial CohFT of infinite rank with the phase space~$V$, the unit~$e_0$, and the metric given in the basis $\{e_a\}_{a\in\mbZ}$ by $\eta_{ab}=\delta_{a+b,0}$. This partial CohFT satisfies the condition~\eqref{eq:condition for partial CohFT}, and therefore, as it was explained in Section~\ref{subsection:comments about infinite rank}, we can use all the results from Sections~\ref{subsection:DZ hierarchy}--\ref{subsection:reconstruction from one flow}.

\medskip

\subsection{Using the computation of the DR hierarchy}\label{subsection:using DR hierarchy}

For the partial CohFT $\{c_{g,n}^P\}$, we consider the associated DR and DZ hierarchies.

\begin{proposition}\label{proposition:main}
We have
\begin{itemize}
\item[1.] $\Omega^{\alpha,a}_{\beta,b}\in\hcA_{w;0}$,

\smallskip

\item[2.] $\Omega^{\alpha,0}_{0,1}=\frac{1}{2}\sum_{\alpha_1+\alpha_2=\alpha}\frac{1}{T_\alpha(\eps\mu\d_x)}\left(\left[e^{-\frac{\alpha_2}{2}\eps\mu\d_x}T_{\alpha_1}(\eps\mu\d_x)w^{\alpha_1}\right]\cdot\left[e^{\frac{\alpha_1}{2}\eps\mu\d_x}T_{\alpha_2}(\eps\mu\d_x)w^{\alpha_2}\right]\right)+\frac{\eps^2}{12}w^\alpha_{xx}$.
\end{itemize}
\end{proposition}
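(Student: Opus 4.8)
The plan is to prove the two parts by different means: Part~1 is reduced to a cohomological vanishing on $\oM_{g,n+2}$, while Part~2 is extracted from the already-known double ramification computation. Throughout I would use that the Pixton partial CohFT is tame, homogeneous and satisfies~\eqref{eq:condition for partial CohFT} with $\eta_{ab}=\delta_{a+b,0}$, unit $e_0$ and $A^a=\delta^{a,0}$, so that the special direction is $\d_x$ and $w^{\top;a}=\frac{\d^2\mcF^\P}{\d t^0_0\d t^{-a}_0}$, and the infinite-rank versions of the results of Sections~\ref{subsection:DZ hierarchy}--\ref{subsection:reconstruction from one flow} apply.

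For Part~1 I would first invoke the equivalences~\eqref{eq:equivalences}: the functions $\Omega^{\alpha,a}_{\beta,b}$ are differential polynomials if and only if $\Omega^{\red;\alpha,a}_{\beta,b}=0$. By~\eqref{eq:Omegared and B-class}, the only possibly nonzero Taylor coefficients of $\Omega^{\red;\alpha,a}_{\beta,b}$ at $t^*_*=0$ are the intersection numbers
\[
\int_{\oM_{g,n+2}}B^2_{g,(d_1,\ldots,d_n)}\,c^\P_{g,n+2}\big(\otimes_{i=1}^n e_{\alpha_i}\otimes e_{-\alpha}\otimes e_\beta\big)\,\psi_{n+1}^a\psi_{n+2}^b,\qquad \sum d_i\ge 2g+1,
\]
so it suffices to prove that all of these vanish. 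Since $c^\P_{g,n+2}=\sum_{j=0}^g 2^{-j}\mu^{2j}P_g^j$ with top term $P_g^g=2^g\DR_g$, I would evaluate these numbers with the methods of~\cite{BSSZ15}, which express integrals of the double ramification cycle against monomials in the psi-classes as polynomials in the entries of the ramification vector of degree at most $2g$. The constraint $\sum d_i\ge 2g+1$, together with the structure of the $B^2$-class and the polynomiality in the $a_i$ (compare the polynomiality of $\DR_g|_{\cM^\ct_{g,n}}$ recalled in Section~\ref{subsection:DR hierarchy}), should force the relevant polynomial to exceed the allowed degree and hence vanish; the lower-degree components $P_g^j$, $j<g$, are treated by the same degree argument applied to their own polynomial dependence on the $a_i$. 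I expect this vanishing to be the main obstacle, since polynomiality of the Dubrovin--Zhang hierarchy is only conjectural for a general partial CohFT (it is governed by~\cite[Conjecture~1]{BS22}), so one must genuinely exploit the special geometry of Pixton's class rather than appeal to a general theorem such as~\cite{BPS12}, which covers only the semisimple CohFT case.

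For Part~2 I would use that Part~1 makes $\Omega^{\alpha,0}_{0,1}$ a bona fide differential polynomial with $\Omega^{\alpha,0}_{0,1}\big|_{w^\gamma_c=w^{\top;\gamma}_c}=\eta^{\alpha\mu}\frac{\d^2\mcF^\P}{\d t^\mu_0\d t^0_1}$, so that it is determined by the two-point descendent intersection numbers of Pixton's class. These I would pin down \emph{without} assuming Miura equivalence, by combining the partial comparison~\eqref{eq:one-point equality} of~\cite{BS22} between the one-point functions of the two hierarchies with the computation of the DR hierarchy in~\cite{BR21a}, which identifies the special-flow generator in the variables $u^a$ with the $t_1$-flow of the noncommutative KdV hierarchy, that is $\d_x\big(\tfrac12\sum_{\alpha_1+\alpha_2=\alpha}(e^{-\frac{\alpha_2}{2}\eps\mu\d_x}u^{\alpha_1})(e^{\frac{\alpha_1}{2}\eps\mu\d_x}u^{\alpha_2})\big)+\frac{\eps^2}{12}u^\alpha_{xxx}$ (see~\eqref{eq:ncKdV,flow1,with x}). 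The ratios $T_a(\eps\mu\d_x)=S(\eps\mu\d_x)/S(a\eps\mu\d_x)$ then enter through the normalization $u^a=T_a(\eps\mu\d_x)w^a$ relating the two sets of variables: substituting this into the DR generator and dividing by $T_\alpha(\eps\mu\d_x)$ produces precisely the claimed expression, the dispersive term $\frac{\eps^2}{12}w^\alpha_{xx}$ emerging cleanly because $T_\alpha(\eps\mu\d_x)$ commutes with $\d_x$. The two-point numbers certifying this substitution at the level of the single flow $\frac{\d}{\d t^0_1}$ I would read off from~\cite{BSSZ15}. Once Part~2 is established, the full Miura equivalence of the two hierarchies follows from Proposition~\ref{proposition:reconstruction}, as both are then homogeneous, polynomial, and share the same special flow.
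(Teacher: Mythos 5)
Your Part~1 correctly reduces, via \eqref{eq:equivalences} and \eqref{eq:Omegared and B-class}, to the vanishing of $\int_{\oM_{g,n+2}}B^2_{g,(d_1,\ldots,d_n)}\,c^\P_{g,n+2}(\otimes_{i=1}^n e_{\alpha_i}\otimes e_{-\alpha}\otimes e_\beta)\,\psi_{n+1}^a\psi_{n+2}^b$ for $\sum d_i\ge 2g+1$, but your proposed proof of that vanishing is a genuine gap. A degree count in the ramification data cannot work: $B^2_{g,(d_1,\ldots,d_n)}$ has cohomological degree $2\sum d_i$, and since $\dim_{\mbC}\oM_{g,n+2}=3g-1+n$ the integrand has complementary total degree for many admissible $j\le g$, $a$, $b$, so nothing forces the polynomial in the $a_i$ to vanish identically; moreover the methods of \cite{BSSZ15} compute integrals of psi-monomials against $\DR_g$, not against the $B^2$-classes of \cite{BS22}, so they do not even apply to these integrals. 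In fact the vanishing you need is essentially an instance of the polynomiality property you rightly note is only conjectural in general, so a soft argument cannot suffice. The paper's solution is precisely the avenue you dismissed: by \cite[proof of Proposition~4.6]{BR22}, for all sufficiently large $r$ there is a genuine \emph{semisimple CohFT} $\Omega^r_{g,n}$ of finite rank $r$ such that $r^{1-2g}\Omega^r_{g,n+2}$, evaluated on the reductions mod $r$ of the given vectors, is polynomial in $r$ with constant term equal to $c^\P_{g,n+2}(\otimes_{i=1}^n e_{\alpha_i}\otimes e_{-\alpha}\otimes e_\beta)$. For each such $r$, \cite[Theorem~22]{BPS12} combined with \eqref{eq:equivalences} and \eqref{eq:Omegared and B-class} gives the vanishing \eqref{eq:vanishing with Omegar}; polynomiality in $r$ then kills the constant term, which is exactly \eqref{eq:needed vanishing}. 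So \cite{BPS12} \emph{is} the engine of the proof, applied not to the (infinite-rank, non-semisimple, merely partial) Pixton CohFT itself but to its finite-rank semisimple approximations.

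Your Part~2 points at the right ingredients (\eqref{eq:one-point equality}, \cite{BR21a}, the substitution $u^\alpha=T_\alpha(\eps\mu\d_x)w^\alpha$, \cite{BSSZ15}), but as written it does not determine $\Omega^{\alpha,0}_{0,1}$. The comparison \eqref{eq:one-point equality} constrains only derivatives at $t^*_*=0$, i.e.\ one-point data, and therefore cannot see the quadratic-in-$w$ part of the flow; using it together with the DR formula to ``produce the claimed expression'' implicitly assumes the DR/DZ Miura equivalence, which is what is being proved. The paper instead first uses the grading forced by $\deg P^j_g=2j$ (plus Part~1) to write the ansatz \eqref{eq:general formula for Omega}, with unknown quadratic coefficients $A_{\beta,k_1;\gamma,k_2}$ and linear coefficients $B_{\alpha,g}$; the $A$-coefficients are then computed by evaluating on the topological solution, expressing everything through the two-point DR integrals of \cite{BSSZ15}, and proving that the resulting functional equation \eqref{eq:equation of formal power series} has a unique solution, which the claimed generating series satisfies; only afterwards are the $B$-coefficients fixed by \eqref{eq:one-point equality} together with \cite[Theorem~4.1]{BR21a} and \cite[Theorem~4.9]{BS22}. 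Your closing step (Proposition~\ref{proposition:reconstruction} plus \cite{BR21a} to upgrade the special flow to the full Miura equivalence and hence Theorem~\ref{theorem:main}) does coincide with the paper.
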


\medskip

Before proving the proposition, let us show how to prove Theorem~\ref{theorem:main} using it. Indeed, the collection of formal power series $(w^{\P;\alpha})_{\alpha\in\mbZ}$ is the topological solution of the DZ hierarchy. Consider the Miura transformation
$$
w^\alpha\mapsto u^\alpha(w^*_*,\mu,\eps):=T_\alpha(\eps\mu\d_x)w^\alpha.
$$
After this transformation, the DZ hierarchy has the form
$$
\frac{\d u^\alpha}{\d t^\beta_b}=\d_x\tP^\alpha_{\beta,b},\qquad \tP^\alpha_{\beta,b}\in\hcA_{u;0}.
$$
We see that it is sufficient to check that the system of flows $\frac{\d}{\d t^0_d}$, $d\ge 1$, of the transformed DZ hierarchy coincides with the noncommutative KdV hierarchy, viewed as a system of evolutionary PDEs with one spatial variable, see Remark~\ref{remark:about ncKdV}. By \cite[Theorem~4.1]{BR21a}, the system of flows $\frac{\d}{\d t^0_d}$, $d\ge 1$, of the DR hierarchy coincides with the noncommutative KdV hierarchy. Therefore, it is sufficient to check that $P^\alpha_{\beta,b}=\tP^\alpha_{\beta,b}$. By Part 2 of Proposition~\ref{proposition:main} and formula~\eqref{eq:ncKdV,flow1,with x}, we have $P^\alpha_{0,1}=\tP^\alpha_{0,1}$. Then Proposition~\ref{proposition:reconstruction} implies that $P^\alpha_{\beta,b}=\tP^\alpha_{\beta,b}$, which completes the proof of the Theorem~\ref{theorem:main}. 

\medskip

The two parts of Proposition~\ref{proposition:main} will be proved in the next two sections, respectively.

\medskip

\subsection{Proof of Proposition~\ref{proposition:main}: Part 1}\label{subsection:part 1}

By~\eqref{eq:equivalences}, Part 1 of the proposition is equivalent to the vanishing $\Omega^{\red;\alpha,a}_{\beta,b}=0$. By~\eqref{eq:Omegared and B-class}, this is equivalent to the vanishing
\begin{gather}\label{eq:needed vanishing}
\int_{\oM_{g,n+2}}B^2_{g,(d_1,\ldots,d_n)}c^P_{g,n+2}\left(\otimes_{i=1}^n e_{\alpha_i}\otimes e_{-\alpha}\otimes e_\beta\right)\psi_{n+1}^a\psi_{n+2}^b=0,
\end{gather}
for any $g\ge 0$, $n\ge 1$, $d_1,\ldots,d_n\in\mbZ_{\ge 0}$, and $\alpha_1,\ldots,\alpha_n\in\mbZ$ satisfying $\sum d_i\ge 2g+1$ and $\alpha=\beta+\sum\alpha_i$.

\medskip

In~\cite[proof of Proposition~4.6]{BR22}, for arbitrary $r\ge 1$, the authors constructed a certain semisimple CohFT $\{\Omega^r_{g,n}\}$ of rank $r$, with phase space $V_r=\mathrm{span}(\{e_0,\ldots,e_{r-1}\})$, metric $\eta_r(e_a,e_b)=\frac{1}{r}\delta_{a+b=0\text{ mod } r}$, and unit $e_0$. Since the CohFT $\{\Omega^r_{g,n}\}$ is semisimple, by~\cite[Theorem~22]{BPS12} and~\eqref{eq:equivalences}, we have
\begin{gather}\label{eq:vanishing with Omegar}
\int_{\oM_{g,n+2}}B^2_{g,(d_1,\ldots,d_n)}\left(r^{1-2g}\Omega^r_{g,n+2}\left(\otimes_{i=1}^n e_{\widetilde{\alpha_i}}\otimes e_{\widetilde{-\alpha}}\otimes e_{\widetilde{\beta}}\right)\right)\psi_{n+1}^a\psi_{n+2}^b=0,
\end{gather}
where for an integer $\gamma$ we denote by $\widetilde{\gamma}\in\{0,\ldots,r-1\}$ the unique number such that $\gamma=\widetilde{\gamma}\text{ mod }r$. By~\cite[proof of Proposition~4.6]{BR22}, for fixed $g,n,\alpha,\beta,\alpha_1,\ldots,\alpha_n$, the class 
$$
r^{1-2g}\Omega^r_{g,n+2}\left(\otimes_{i=1}^n e_{\widetilde{\alpha_i}}\otimes e_{\widetilde{-\alpha}}\otimes e_{\widetilde{\beta}}\right)
$$
is a polynomial in $r$ for $r$ sufficiently large, and moreover the constant term of this polynomial is equal to the class $c^P_{g,n+2}\left(\otimes_{i=1}^n e_{\alpha_i}\otimes e_{-\alpha}\otimes e_\beta\right)$. Thus, the vanishing~\eqref{eq:vanishing with Omegar} implies the vanishing~\eqref{eq:needed vanishing}. This completes the proof of Part 1 of the proposition. 

\medskip

\subsection{Proof of Proposition~\ref{proposition:main}: Part 2}\label{subsection:part 2}

Since $\deg P^j_g(a_1,\ldots,a_n)=2j$, dimension counting gives
$$
\underbrace{\left(\frac{3}{2}\eps\frac{\d}{\d\eps}-\frac{1}{2}\mu\frac{\d}{\d\mu}+\sum(1-d)t^\gamma_d\frac{\d}{\d t^\gamma_d}\right)}_{=:L}\mcF^\P=3\mcF^\P,
$$
which implies that
$$
L\left(\frac{\d^2\mcF^\P}{\d t^\alpha_p\d t^\beta_q}\right)=(1+p+q)\frac{\d^2\mcF^\P}{\d t^\alpha_p\d t^\beta_q},\qquad L\left(w^{\P;\alpha}_n\right)=(1-n)w^{\P;\alpha}_n.
$$
Therefore,
\begin{gather}\label{eq:degree condition-0}
\left(\frac{3}{2}\eps\frac{\d}{\d\eps}-\frac{1}{2}\mu\frac{\d}{\d\mu}+\sum_{n\ge 0} (1-n)w^\gamma_n\frac{\d}{\d w^\gamma_n}\right)\Omega^{\alpha,p}_{\beta,q}=(1+p+q)\Omega^{\alpha,p}_{\beta,q}.
\end{gather}
Since $\Omega^{\alpha,a}_{\beta,b}\in\hcA_{w;0}$, we have $\left(\eps\frac{\d}{\d\eps}-\sum_{n\ge 0}n w^\gamma_n\frac{\d}{\d w^\gamma_n}\right)\Omega^{\alpha,p}_{\beta,q}=0$, and combining this with~\eqref{eq:degree condition-0} we obtain
\begin{gather}\label{eq:degree condition}
\left(\frac{1}{2}\eps\frac{\d}{\d\eps}-\frac{1}{2}\mu\frac{\d}{\d\mu}+\sum_{n\ge 0}w^\gamma_n\frac{\d}{\d w^\gamma_n}\right)\Omega^{\alpha,p}_{\beta,q}=(1+p+q)\Omega^{\alpha,p}_{\beta,q}.
\end{gather}

\medskip

Note that $\mcF^\P\in\mbC[[t^*_*,\eps^2,(\eps\mu)^2]]$, which implies that the coefficient of $\eps^i\mu^j$ in $\Omega^{\alpha,p}_{\beta,q}$ is zero unless $i,j$ are even and $i\ge j$. Therefore, $\Omega^{\alpha,0}_{0,1}$ has the form
\begin{gather}\label{eq:general formula for Omega}
\Omega^{\alpha,0}_{0,1}=\sum_{g\ge 0}\frac{(\eps\mu)^{2g}}{2}\sum_{\substack{\beta+\gamma=\alpha\\k_1+k_2=2g}}A_{\beta,k_1;\gamma,k_2}w^\beta_{k_1}w^\gamma_{k_2}+\sum_{g\ge 1}\eps^{2g}\mu^{2g-2}B_{\alpha,g} w^\alpha_{2g},\quad A_{\beta,k_1;\gamma,k_2},B_{\alpha,g}\in\mbC,
\end{gather}
where $A_{\beta,k_1;\gamma,k_2}=A_{\gamma,k_2;\beta,k_1}$.

\medskip

It remains to prove that 
\begin{align}
&A_{\alpha_1,k_1;\alpha_2,k_2}=\Coef_{z_1^{k_1}z_2^{k_2}}\left[\frac{T_{\alpha_1}(z_1)T_{\alpha_2}(z_2)}{T_{\alpha_1+\alpha_2}(z_1+z_2)}\frac{\ozeta(\alpha_1 z_2-\alpha_2 z_1)}{2}\right],\label{eq:A-coef}\\
&B_{\alpha,g}=\frac{\delta_{g,1}}{12}.\label{eq:B-coef}
\end{align}
where $\ozeta:=e^{z/2}+e^{-z/2}$.

\medskip

Let us prove equation~\eqref{eq:A-coef} or equivalently that
\begin{gather}\label{eq:A-coef-2}
A_{\alpha_1,\alpha_2}(z_1,z_2):=\sum_{k_1,k_2\ge 0}A_{\alpha_1,k_1;\alpha_2,k_2}z_1^{k_1}z_2^{k_2}=\frac{T_{\alpha_1}(z_1)T_{\alpha_2}(z_2)}{T_{\alpha_1+\alpha_2}(z_1+z_2)}\frac{\ozeta(\alpha_1 z_2-\alpha_2 z_1)}{2},
\end{gather}
where we define $A_{\alpha_1,k_1;\alpha_2,k_2}:=0$ when $k_1+k_2$ is not even. Introduce the following notations:
\begin{align*}
&C_{\alpha,g}:=\int_{\oM_{g,3}}\DR_g(\alpha,-\alpha,0)\psi_1^{2g},\\
&D^n_{\alpha_1,d_1;\alpha_2,d_2}:=
\begin{cases}
\int_{\oM_{g,n+3}}\DR_g(\alpha_1,\alpha_2,-\alpha_1-\alpha_2,\underbrace{0,\ldots,0}_{\text{$n$ times}})\psi_1^{d_1}\psi_2^{d_2},&\text{if $g:=\frac{d_1+d_2-n}{2}\in\mbZ_{\ge 0}$},\\
0,&\text{otherwise}.
\end{cases}
\end{align*}
By \cite[Theorems~1 and~2]{BSSZ15} (see also~\cite[Lemma~2.1]{Bur17}), we have
\begin{gather*}
\sum_{g\ge 0}C_{g,\alpha}z^{2g}=\frac{1}{T_{\alpha}(z)},\qquad \sum_{d_1,d_2\ge 0}D^n_{\alpha_1,d_1;\alpha_2,d_2}z_1^{d_1}z_2^{d_2}=(z_1+z_2)^n\frac{S(\alpha_1 z_2-\alpha_2 z_1)}{T_{\alpha_1+\alpha_2}(z_1+z_2)}.
\end{gather*}

\medskip

Let us fix $\alpha_1,\alpha_2\in\mbZ$ and $\alpha=\alpha_1+\alpha_2$. On both sides of~\eqref{eq:general formula for Omega}, let us now substitute $w^\gamma_n=w^{\P;\gamma}_n$, $\mu=\eps^{-1}$, apply the operator $\sum_{d_1,d_2\ge 0}z_1^{d_1}z_2^{d_2}\frac{\d^2}{\d t^{\alpha_1}_{d_1}\d t^{\alpha_2}_{d_2}}$, and finally set $\eps=t^*_*=0$. On the left-hand side, we obtain 
\begin{align*}
&\sum_{g\ge 0}\sum_{\substack{d_1,d_2\ge 0\\d_1+d_2=2g}}\left(\int_{\oM_{g,4}}\DR_g(\alpha_1,\alpha_2,0,-\alpha)\psi_1^{d_1}\psi_2^{d_2}\psi_3\right)z_1^{d_1}z_2^{d_2}=\\
&\hspace{4cm}=\sum_{d_1,d_2\ge 0}(d_1+d_2+1)D^0_{\alpha_1,d_1;\alpha_2,d_2}z_1^{d_1}z_2^{d_2}=\\
&\hspace{4cm}=\left(1+z_1\frac{\d}{\d z_1}+z_2\frac{\d}{\d z_2}\right)\frac{S(\alpha_1 z_2-\alpha_2 z_1)}{T_{\alpha_1+\alpha_2}(z_1+z_2)}.
\end{align*}

\medskip

Consider now the right-hand side of~\eqref{eq:general formula for Omega}. Let us first substitute $w^\gamma_n=w^{\P;\gamma}_n$, $\mu=\eps^{-1}$, and set $\eps=0$. We obtain
\begin{gather}\label{eq:simpler RHS-1}
\frac{1}{2}\sum_{\substack{\beta+\gamma=\alpha\\k_1,k_2\ge 0}}A_{\beta,k_1;\gamma,k_2}\tw^{\P;\beta}_{k_1}\tw^{\P;\gamma}_{k_2},\quad\text{where}\quad \tw^{\P;\beta}_d:=\left.\left(\left.w^{\P;\beta}_d\right|_{\mu=\eps^{-1}}\right)\right|_{\eps=0}.
\end{gather}
Let us now apply the operator $\sum_{d_1,d_2\ge 0}z_1^{d_1}z_2^{d_2}\frac{\d^2}{\d t^{\alpha_1}_{d_1}\d t^{\alpha_2}_{d_2}}$ to~\eqref{eq:simpler RHS-1} and set $t^*_*=0$, we obtain
\begin{gather}\label{eq:simpler RHS-2}
\sum_{d_1,d_2\ge 2}z_1^{d_1}z_2^{d_2}\sum_{\substack{\beta+\gamma=\alpha\\k_1,k_2\ge 0}}A_{\beta,k_1;\gamma,k_2}\left.\left(\frac{\d\tw^{\P;\beta}_{k_1}}{\d t^{\alpha_1}_{d_1}}\frac{\d\tw^{\P;\gamma}_{k_2}}{\d t^{\alpha_2}_{d_2}}\right)\right|_{t^*_*=0}+\sum_{d_1,d_2\ge 0}z_1^{d_1}z_2^{d_2}\sum_{h\ge 1}A_{0,1;\alpha,2h-1}\left.\left(\frac{\d^2\tw^{\P;\alpha}_{2h-1}}{\d t^{\alpha_1}_{d_1}\d t^{\alpha_2}_{d_2}}\right)\right|_{t^*_*=0},
\end{gather}
where we used that $\left.w^{\P;\beta}_n\right|_{t^*_*=0}=\delta^{\beta,0}\delta_{n,1}$. Note that
\begin{gather*}
\left.\frac{\d\tw^{\P;\beta}_k}{\d t^\alpha_d}\right|_{t^*_*=0}=
\begin{cases}
\delta_{\alpha,\beta}C_{\alpha,\frac{d-k}{2}},&\text{if $\frac{d-k}{2}\in\mbZ_{\ge 0}$},\\
0,&\text{otherwise},
\end{cases}
\qquad
\left.\frac{\d^2\tw^{\P;\alpha}_{2h-1}}{\d t^{\alpha_1}_{d_1}\d t^{\alpha_2}_{d_2}}\right|_{t^*_*=0}=D^{2h}_{\alpha_1,d_1;\alpha_2,d_2}.
\end{gather*}
Therefore, the formal power series~\eqref{eq:simpler RHS-2} is equal to
$$
\frac{A_{\alpha_1,\alpha_2}(z_1,z_2)}{T_{\alpha_1}(z_1)T_{\alpha_2}(z_2)}+\frac{S(\alpha_1z_2-\alpha_2z_1)}{T_{\alpha}(z_1+z_2)}\sum_{h\ge 1}A_{0,1;\alpha,2h-1}(z_1+z_2)^{2h},
$$
and we obtain the equation
\begin{multline}\label{eq:equation of formal power series}
\left(1+z_1\frac{\d}{\d z_1}+z_2\frac{\d}{\d z_2}\right)\frac{S(\alpha_1 z_2-\alpha_2 z_1)}{T_{\alpha}(z_1+z_2)}=\\
=\frac{A_{\alpha_1,\alpha_2}(z_1,z_2)}{T_{\alpha_1}(z_1)T_{\alpha_2}(z_2)}+\frac{S(\alpha_1z_2-\alpha_2z_1)}{T_{\alpha}(z_1+z_2)}\sum_{h\ge 1}A_{0,1;\alpha,2h-1}(z_1+z_2)^{2h}.
\end{multline}

\medskip

We claim that the system of equations~\eqref{eq:equation of formal power series} for all $\alpha_1,\alpha_2\in\mbZ$ determines all the numbers~$A_{\alpha_1,d_1;\alpha_2,d_2}$. Indeed, let us choose $\alpha_1=0$. Then the coefficient of $z_1^{d_1}z_2^{d_2}$ on the right-hand side of~\eqref{eq:equation of formal power series} is equal to $A_{0,d_1;\alpha,d_2}+{d_1+d_2\choose d_1}A_{0,1;\alpha,d_1+d_2-1}$ plus a linear combination of the numbers~$A_{0,k_1;\alpha,k_2}$ with $k_1+k_2<d_1+d_2=:d$. Suppose that all the numbers $A_{0,k_1;\alpha,k_2}$ with $k_1+k_2<d$ are already determined. Note that for $d_1=1$ we have $A_{0,d_1;\alpha,d_2}+{d_1+d_2\choose d_1}A_{0,1;\alpha,d_1+d_2-1}=(d_2+2)A_{0,1;\alpha,d_2}$. So it is clear that considering the coefficient of $z_1 z_2^{d-1}$ determines the number~$A_{0,1;\alpha,d-1}$. Then considering the coefficient of $z_1^{d_1}z_2^{d_2}$ for arbitrary $d_1,d_2$ satisfying $d_1+d_2=d$ determines the number~$A_{0,d_1;\alpha,d_2}$. Therefore, the system of equations~\eqref{eq:equation of formal power series} with $\alpha_1=0$ and arbitrary $\alpha_2$ determines all the numbers $A_{0,d_1;\alpha,d_2}$, or equivalently this system determines the second summand on the right-hand side of~\eqref{eq:equation of formal power series}. It becomes obvious now that the full system of equations~\eqref{eq:equation of formal power series} with $\alpha_1,\alpha_2\in\mbZ$ determines the first summand on the right-hand side of~\eqref{eq:equation of formal power series} and hence the formal power series $A_{\alpha_1,\alpha_2}(z_1,z_2)$ is determined.

\medskip

So in order to prove~\eqref{eq:A-coef-2}, it is sufficient to check that the formal power series 
$$
\tA_{\alpha_1,\alpha_2}(z_1,z_2):=\frac{T_{\alpha_1}(z_1)T_{\alpha_2}(z_2)}{T_{\alpha}(z_1+z_2)}\frac{\ozeta(\alpha_1 z_2-\alpha_2 z_1)}{2}
$$
satisfies equation~\eqref{eq:equation of formal power series}. Therefore, we have to check that
\begin{multline}\label{eq:equation with tA}
\left(1+z_1\frac{\d}{\d z_1}+z_2\frac{\d}{\d z_2}\right)\frac{S(\alpha_1 z_2-\alpha_2 z_1)}{T_{\alpha}(z_1+z_2)}=\\
=\frac{\ozeta(\alpha_1 z_2-\alpha_2 z_1)}{2 T_{\alpha}(z_1+z_2)}+\frac{S(\alpha_1z_2-\alpha_2z_1)}{T_{\alpha}(z_1+z_2)}\sum_{h\ge 1}\tA_{0,1;\alpha,2h-1}(z_1+z_2)^{2h}.
\end{multline}
Note that $\left(1+z_1\frac{\d}{\d z_1}+z_2\frac{\d}{\d z_2}\right)S(\alpha_1z_2-\alpha_2z_1)=\frac{\ozeta(\alpha_1z_2-\alpha_2z_1)}{2}$, and therefore equation~\eqref{eq:equation with tA} is equivalent to
$$
S(\alpha_1 z_2-\alpha_2 z_1)\left(z_1\frac{\d}{\d z_1}+z_2\frac{\d}{\d z_2}\right)\frac{1}{T_{\alpha}(z_1+z_2)}=\frac{S(\alpha_1z_2-\alpha_2z_1)}{T_{\alpha}(z_1+z_2)}\sum_{h\ge 1}\tA_{0,1;\alpha,2h-1}(z_1+z_2)^{2h}.
$$
So we have to check the identity
$$
z\d_z\frac{1}{T_{\alpha}(z)}=\frac{1}{T_{\alpha}(z)}\sum_{h\ge 1}\tA_{0,1;\alpha,2h-1}z^{2h},
$$
which is true because
$$
\sum_{h\ge 1}\tA_{0,1;\alpha,2h-1}z^{2h}=\left.z\frac{\d}{\d z_1}\tA_{0,\alpha}(z_1,z)\right|_{z_1=0}=\left.T_\alpha(z)z\frac{\d}{\d z_1}\frac{1}{T_\alpha(z_1+z)}\right|_{z_1=0}=T_\alpha(z)z\d_z\frac{1}{T_\alpha(z)}.
$$
This concludes the proof of equation~\eqref{eq:A-coef}.

\medskip

Let us prove equation~\eqref{eq:B-coef}. We consider the formal power series~$\mcF^{\alpha,0}$, $\Omega^{\red;\alpha,0}$, $\mcF^{\DR;\alpha}$, and differential polynomials $\tOmega^{\alpha,0}$ defined in Section~\ref{subsection:relation between two hierarchies}. Since $T_\alpha(z)=T_\alpha(-z)$, the expansion of~$T_\alpha(z)$ has the form $T_{\alpha}(z)=\sum_{g\ge 0}T_{\alpha,g}z^{2g}$, $T_{\alpha,g}\in\mbQ$.

\begin{lemma}
We have $\tOmega^{\alpha,0}=-\sum_{g\ge 1}(\eps\mu)^{2g}T_{\alpha,g}w^\alpha_{2g-1}$.
\end{lemma}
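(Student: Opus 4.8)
The plan is to first determine the \emph{shape} of $\tOmega^{\alpha,0}$ from the gradings alone, and then to fix the remaining scalar coefficients by testing the defining property of $\tOmega^{\alpha,0}$ against a single family of two-point double ramification integrals.

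\emph{Step 1: the shape.} I would collect four homogeneity constraints on $\tOmega^{\alpha,0}$. It lies in $\hcA_{w;-1}$ and, since $\{c^\P_{g,n}\}$ satisfies~\eqref{eq:condition for partial CohFT}, it is $\tdeg$-homogeneous with $\tdeg\tOmega^{\alpha,0}=\alpha$. Because $\mcF^\P\in\mbC[[t^*_*,\eps^2,(\eps\mu)^2]]$, the very argument that produced the shape~\eqref{eq:general formula for Omega} shows that in $\tOmega^{\alpha,0}$ the coefficient of $\eps^i\mu^j$ vanishes unless $i,j$ are even and $i\ge j$. Finally, differentiating $L\mcF^\P=3\mcF^\P$ once and using $[L,\frac{\d}{\d t^{-\alpha}_0}]=-\frac{\d}{\d t^{-\alpha}_0}$ gives $L\left(\frac{\d\mcF^\P}{\d t^{-\alpha}_0}\right)=2\frac{\d\mcF^\P}{\d t^{-\alpha}_0}$, which translates (exactly as~\eqref{eq:degree condition} was obtained) into $\left(\tfrac12\eps\frac{\d}{\d\eps}-\tfrac12\mu\frac{\d}{\d\mu}+\sum_{n}w^\gamma_n\frac{\d}{\d w^\gamma_n}\right)\tOmega^{\alpha,0}=\tOmega^{\alpha,0}$. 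For a monomial $\eps^i\mu^j\prod_{l=1}^k w^{\gamma_l}_{c_l}$ these constraints read $\sum c_l=i-1$, $\sum\gamma_l=\alpha$, $i\ge j$ even, and $\tfrac{i}{2}-\tfrac{j}{2}+k=1$; the last two force $k=1+\tfrac{j-i}{2}\le 1$, hence $k=1$ and $i=j$. Thus $\tOmega^{\alpha,0}=\sum_{g\ge 1}c_{\alpha,g}(\eps\mu)^{2g}w^\alpha_{2g-1}$ for scalars $c_{\alpha,g}$ still to be found.

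\emph{Step 2: testing the coefficients.} To pin down $c_{\alpha,g}$ I would apply $\frac{\d}{\d t^\alpha_{2g-1}}$ to $\Omega^{\red;\alpha,0}=\frac{\d\mcF^\P}{\d t^{-\alpha}_0}-\tOmega^{\alpha,0}|_{w^\gamma_c=w^{\P;\gamma}_c}$, set $t^*_*=0$, and extract $\Coef_{\eps^{2g}}$; since $2g-1\le 2g-1$, the defining property from Section~\ref{subsection:relation between two hierarchies} forces the result to vanish. The contribution of $\frac{\d\mcF^\P}{\d t^{-\alpha}_0}$ is $\mu^{2g}\int_{\oM_{g,2}}\DR_g(\alpha,-\alpha)\psi_2^{2g-1}$, which by $\DR_g(\alpha,-\alpha,0)=\pi^*\DR_g(\alpha,-\alpha)$ and the string equation equals $\mu^{2g}C_{\alpha,g}$. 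The term $c_{\alpha,h}(\eps\mu)^{2h}w^{\P;\alpha}_{2h-1}$ contributes, after the same operations and after matching $\eps$-powers (which forces genus $g-h$ and the top Pixton degree, so $P^{g-h}_{g-h}=2^{g-h}\DR_{g-h}$), the quantity $\mu^{2g}c_{\alpha,h}\int_{\oM_{g-h,2h+2}}\DR_{g-h}(0,\ldots,0,-\alpha,\alpha)\psi^{2g-1}$. This last integral is $D^{2h-1}_{\alpha,2g-1;0,0}$, and reading the given $D$-series at $z_2=0$ gives $z_1^{2h-1}/T_\alpha(z_1)$, whose coefficient of $z_1^{2g-1}$ is $C_{\alpha,g-h}$. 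The vanishing then reads $C_{\alpha,g}=\sum_{h=1}^{g}c_{\alpha,h}C_{\alpha,g-h}$ for all $g\ge 1$.

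\emph{Step 3: solving.} Writing $\sum_{g\ge 0}C_{\alpha,g}z^{2g}=\frac{1}{T_\alpha(z)}$ (with $C_{\alpha,0}=1$) and $\mathfrak c(z):=\sum_{h\ge 1}c_{\alpha,h}z^{2h}$, the convolution identity becomes $\frac{1}{T_\alpha(z)}-1=\mathfrak c(z)\frac{1}{T_\alpha(z)}$, i.e. $\mathfrak c(z)=1-T_\alpha(z)=-\sum_{g\ge 1}T_{\alpha,g}z^{2g}$, so $c_{\alpha,g}=-T_{\alpha,g}$, as claimed. The logic is complete because $\tOmega^{\alpha,0}$ is unique: Step~1 shows it has the stated linear shape, and the necessary conditions of Step~2 determine its coefficients. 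The part most likely to cause trouble is the pair of double ramification evaluations in Step~2 — reducing the two-point integral to $C_{\alpha,g}$ via the string equation, and identifying the $(2h{+}2)$-point integral with the $D$-series of~\cite{BSSZ15} — together with the careful bookkeeping of which marked point carries the $\psi$-power and of the $\eps,\mu$-weights produced by the substitution $w^\gamma_c=w^{\P;\gamma}_c$; by contrast Step~1 is routine once one grants that $\tOmega^{\alpha,0}$ inherits the gradings of $\mcF^\P$.
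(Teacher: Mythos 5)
Your proof is correct, and it takes a genuinely different route from the paper's. The paper argues by \emph{sufficiency}: it takes the explicit candidate, observes that the corresponding reduced series is $T_\alpha(\eps\mu\d_x)\mcF^{\alpha,0}$, and verifies the full defining vanishing property case by case ($n\ge 2$, and $n=1$ with $d\le 2g-2$, via the string equation and degree reasons; $n=1$, $d=2g-1$ via the two-point DR evaluation; $n=0$, $g=1$ via $\DR_1(0)=-\lambda_1$), then invokes uniqueness from \cite[Theorem~4.6]{BS22}. You argue by \emph{necessity}: existence is taken from \cite{BS22}, four grading constraints force the linear shape $\sum_{g\ge 1}c_{\alpha,g}(\eps\mu)^{2g}w^\alpha_{2g-1}$, and the single family of conditions at $n=1$, $d_1=2g-1$ yields the convolution system $C_{\alpha,g}=\sum_{h=1}^g c_{\alpha,h}C_{\alpha,g-h}$, solved by $c_{\alpha,g}=-T_{\alpha,g}$ since $\sum_g C_{\alpha,g}z^{2g}=1/T_\alpha(z)$. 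The computational heart is the same identity $T_\alpha(z)\cdot T_\alpha(z)^{-1}=1$ in both proofs --- the paper reads it as the verification $\sum_j T_{\alpha,j}C_{\alpha,g-j}=0$, you read it as solving for unknown coefficients --- and your reduction $D^{2h-1}_{\alpha,2g-1;0,0}=C_{\alpha,g-h}$ matches the paper's string-equation pushforwards; your DR evaluations in Step~2 (including the dimension count forcing the top Pixton degree $j=g-h$) are all correct. What your route buys is economy: no case analysis and no genus-one check. What it costs is the justification of Step~1, which is the one load-bearing claim you assert rather than prove: unlike $\Omega^{\alpha,0}_{0,1}$, which is pinned down by an \emph{exact} substitution identity (so eigenvalue equations transfer through the injective substitution map $\hcA^{\wk}_w\to\mbC[[t^*_*,\eps,\mu]]$), $\tOmega^{\alpha,0}$ is characterized only by the vanishing of Taylor coefficients with $\sum d_i\le 2g-1$. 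To get its homogeneity you should note that each grading operator acts diagonally on those Taylor coefficients, that $\mcF^{\alpha,0}$ is an eigenvector (eigenvalue $2$ for $L$, eigenvalue $\alpha$ for $\tdeg$), and that substitution $w^\gamma_c\mapsto w^{\top;\gamma}_c$ intertwines the gradings; hence the defining conditions decouple across eigencomponents, the relevant eigencomponent of $\tOmega^{\alpha,0}$ again satisfies the defining property, and uniqueness kills all the others. This decoupling argument is standard and does go through (also fixing the small omission that $k=0$ is excluded because it would force $i=1$, odd); with it spelled out, your proof is complete.
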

\begin{proof}
We have to check that
\begin{gather}\label{eq:condition for reduced}
\Coef_{\eps^{2g}}\left.\frac{\d^n\left(T_\alpha(\eps\mu\d_x)\mcF^{\alpha,0}\right)}{\d t^{\alpha_1}_{d_1}\cdots\d t^{\alpha_n}_{d_n}}\right|_{t^*_*=0}=0\quad\begin{minipage}{10cm}
for any $g,n\ge 0$ and $\alpha_1,\ldots,\alpha_n\in\mbZ$, $d_1,\ldots,d_n\in\mbZ_{\ge 0}$\\
satisfying $\sum d_i\le 2g-1$.
\end{minipage}
\end{gather}
For $n\ge 2$, this immediately follows from degree reasons and the string equation
$$
\left(\frac{\d}{\d t^0_0}-\sum_{n\ge 0}t^\gamma_{n+1}\frac{\d}{\d t^\gamma_n}\right)\mcF^{\alpha,0}=t^\alpha_0.
$$
For $n=1$, the left-hand side of~\eqref{eq:condition for reduced} vanishes unless $\alpha_1=\alpha$, because $\sum_{m\ge 0}\gamma t^\gamma_m\frac{\d\mcF^{\P}}{\d t^\gamma_m}=0$. For $d\le 2g-2$, the vanishing 
$$
\Coef_{\eps^{2g}}\left.\frac{\d\left(T_\alpha(\eps\mu\d_x)\mcF^{\alpha,0}\right)}{\d t^{\alpha}_d}\right|_{t^*_*=0}=0
$$
follows again from degree reasons and the string equation for $\mcF^{\alpha,0}$.

\medskip

Then for $g\ge 1$, we compute
\begin{align*}
\Coef_{\eps^{2g}}\left.\frac{\d\left(T_\alpha(\eps\mu\d_x)\mcF^{\alpha,0}\right)}{\d t^{\alpha}_{2g-1}}\right|_{t^*_*=0}=&\mu^{2g}\sum_{j=0}^{g-1}T_{\alpha,j}\int_{\oM_{g-j,2}}\DR_{g-j}(\alpha,-\alpha)\psi_1^{2g-2j-1}+\mu^{2g}T_{\alpha,g}=\\
=&\mu^{2g}\sum_{j=0}^g T_{\alpha,j}\Coef_{z^{2g-2j}}\frac{S(\alpha z)}{S(z)}=\\
=&\Coef_{z^{2g}}\left(T_\alpha(z)\frac{S(\alpha z)}{S(z)}\right)=\\
=&0.
\end{align*}

\medskip

For $n=0$ and $g\ge 2$, the vanishing~\eqref{eq:condition for reduced} again follows from degree reasons and the string equation. It remains to check the case $n=0$ and $g=1$:
$$
\Coef_{\eps^{2}}\left.\left(T_\alpha(\eps\mu\d_x)\mcF^{\alpha,0}\right)\right|_{t^*_*=0}=\delta^{\alpha,0}\mu^2\bigg(\int_{\oM_{1,1}}\underbrace{\DR_1(0)}_{=-\lambda_1}+\underbrace{T_{0,1}}_{=\frac{1}{24}}\int_{\oM_{0,3}}1\bigg)=0.
$$
This completes the proof of the lemma.
\end{proof}

\medskip

The lemma implies that 
$$
w^{\red;\alpha}:=\frac{\d}{\d t^0_0}\Omega^{\red;\alpha,0}=\frac{\d}{\d t^0_0}\left(\mcF^{\alpha,0}+\sum_{g\ge 1}(\eps\mu)^{2g}T_{\alpha,g}w^{\top;\alpha}_{2g-1}\right)=T_\alpha(\eps\mu\d_x)w^{\top;\alpha},
$$
and from~\eqref{eq:general formula for Omega} and~\eqref{eq:A-coef} it then follows that 
\begin{gather}\label{eq:equation for wred}
\frac{w^{\red;\alpha}}{\d t^0_1}=\frac{1}{2}\sum_{\alpha_1+\alpha_2=\alpha}\d_x\left(e^{-\frac{\alpha_2}{2}\eps\mu\d_x}w^{\red;\alpha_1}\cdot e^{\frac{\alpha_1}{2}\eps\mu\d_x}w^{\red;\alpha_2}\right)+\sum_{g\ge 1}\eps^{2g}\mu^{2g-2}B_{\alpha,g}w^{\red;\alpha}_{2g+1}.
\end{gather}
Since $u^{\str;\alpha}$ satisfies the noncommutative KdV hierarchy (after the identification $t^0_n=t_n$), we also have
\begin{gather}\label{eq:equation for ustr}
\frac{u^{\str;\alpha}}{\d t^0_1}=\frac{1}{2}\sum_{\alpha_1+\alpha_2=\alpha}\d_x\left(e^{-\frac{\alpha_2}{2}\eps\mu\d_x}u^{\str;\alpha_1}\cdot e^{\frac{\alpha_1}{2}\eps\mu\d_x}u^{\str;\alpha_2}\right)+\frac{1}{12}\eps^2 u^{\str;\alpha}_{xxx}.
\end{gather}
Define an operator $G_\alpha\colon\mbC[[\eps,\mu,t^*_*]]\to\mbC[[\eps,\mu,z]]$ by
$$
G_\alpha(f):=\sum_{a\ge 0}\left.\frac{\d f}{\d t^\alpha_a}\right|_{t^*_*=0}z^a,\quad f\in\mbC[[\eps,\mu,t^*_*]].
$$
Let us apply the operator $G_\alpha$ to both sides equations~\eqref{eq:equation for wred} and~\eqref{eq:equation for ustr}.

\medskip

Let us recall the string and dilaton equations for $\Omega^{\red;\alpha,0}$ and $\mcF^{\DR;\alpha}$:
\begin{align*}
&\left(\frac{\d}{\d t^0_0}-\sum_{n\ge 0}t^\gamma_{n+1}\frac{\d}{\d t^\gamma_n}\right)\Omega^{\red;\alpha,0}=\left(\frac{\d}{\d t^0_0}-\sum_{n\ge 0}t^\gamma_{n+1}\frac{\d}{\d t^\gamma_n}\right)\mcF^{\DR;\alpha}=t^\alpha_0,\\
&\left(\frac{\d}{\d t^0_1}-\eps\frac{\d}{\d\eps}-\sum_{n\ge 0}t^\gamma_n\frac{\d }{\d t^\gamma_n}+1\right)\Omega^{\red;\alpha,0}=\left(\frac{\d}{\d t^0_1}-\eps\frac{\d}{\d\eps}-\sum_{n\ge 0}t^\gamma_n\frac{\d }{\d t^\gamma_n}+1\right)\mcF^{\DR;\alpha}=0.
\end{align*}
From~\eqref{eq:one-point equality} we then obtain that
\begin{gather*}
G_\alpha\left(\frac{w^{\red;\alpha}}{\d t^0_1}\right)=G_\alpha\left(\frac{u^{\str;\alpha}}{\d t^0_1}\right),\qquad G_\alpha(w^{\res;\alpha}_n)=z^n G_\alpha(w^{\res;\alpha})=z^n G_\alpha(u^{\str;\alpha})=G_\alpha(u^{\str;\alpha}_n).
\end{gather*}
Using also that $w^{\red;\alpha}_n|_{t^*_*=0}=u^{\str;\alpha}_n|_{t^*_*=0}=\delta^{\alpha,0}_{n,1}$, we get
\begin{multline*}
G_\alpha\left[\sum_{\alpha_1+\alpha_2=\alpha}\d_x\left(e^{-\frac{\alpha_2}{2}\eps\mu\d_x}w^{\red;\alpha_1}\cdot e^{\frac{\alpha_1}{2}\eps\mu\d_x}w^{\red;\alpha_2}\right)\right]=\\
=G_\alpha\left[\sum_{\alpha_1+\alpha_2=\alpha}\d_x\left(e^{-\frac{\alpha_2}{2}\eps\mu\d_x}u^{\str;\alpha_1}\cdot e^{\frac{\alpha_1}{2}\eps\mu\d_x}u^{\str;\alpha_2}\right)\right].
\end{multline*}
So we conclude that
$$
\underline{\left(\sum_{g\ge 1}\eps^{2g}\mu^{2g-2}B_{\alpha,g}z^{2g+1}-\frac{\eps^2}{12}z^3\right)}G_\alpha(u^{\str;\alpha})=0.
$$
By \cite[Theorem~4.9]{BS22}, we have 
$$
\Coef_{\eps^2\mu^0}\left.\frac{\d u^{\str;\alpha}}{\d t^\alpha_3}\right|_{t^*_*=0}=\Coef_{a^2}\int_{\oM_{1,2}}\lambda_1\DR_1(a,-a)=\frac{1}{24}.
$$
So $G_\alpha(u^{\str;\alpha})\ne 0$, and therefore the underlined expression above is equal to zero. This proves equation~\eqref{eq:B-coef}, completes the proof of Proposition~\ref{proposition:main}, and thus Theorem~\ref{theorem:main} is proved.

\medskip

\end{document}